\documentclass[11pt]{article}

\usepackage{enumerate}
\usepackage{amsmath}
\usepackage{amssymb,latexsym}
\usepackage{amsthm}
\usepackage{color}
\usepackage{graphicx}
\usepackage{amscd}
\usepackage{hyperref}

\title{Linear sets and MRD-codes arising from a class of scattered linearized polynomials}
\author{Giovanni Longobardi - Corrado Zanella}

\newcommand{\cC}{{\mathcal C}}

\newcommand{\F}{{\mathbb F}}

\newcommand{\Fq}{{\F_q}}

\newcommand{\Fqn}{\F_{q^n}}

\newcommand{\Fqt}{{\F_{q^t}}}

\newcommand{\la}{\langle}
\newcommand{\ra}{\rangle}
\newcommand{\im}{\textnormal{im}}
\renewcommand{\mod}{\hbox{{\rm mod}\,}}

\newtheorem{theorem}{Theorem}[section]
\newtheorem{lemma}[theorem]{Lemma}
\newtheorem{corollary}[theorem]{Corollary}
\newtheorem{proposition}[theorem]{Proposition}

\DeclareMathOperator{\tr}{Tr}
\newcommand{\Trnt}{\tr_{q^n/q^t}}
\DeclareMathOperator{\PG}{{PG}}

\DeclareMathOperator{\GL}{{GL}}

\DeclareMathOperator{\PGaL}{P\Gamma L}
\DeclareMathOperator{\GaL}{\Gamma L}

\DeclareMathOperator{\rk}{rk}
\DeclareMathOperator{\N}{N}

\DeclareMathOperator{\End}{End}
\DeclareMathOperator{\intn}{intn}

\theoremstyle{definition}

\newtheorem{remark}[theorem]{Remark}

\begin{document}

\maketitle
\begin{abstract}
	A class of scattered linearized polynomials covering infinitely many field extensions is exhibited.
	More precisely, the $q$-polynomial over $\F_{q^6}$, $q \equiv 1\pmod 4$ described in \cite{ZZ,BZZ}
	is generalized for any even $n\ge6$ to an $\Fq$-linear automorphism
	$\psi(x)$ of $\Fqn$ of order $n$.
	Such $\psi(x)$ and some functional powers of it are proved to be scattered.
	In particular this  provides new maximum scattered linear sets of the projective line 
	$\PG(1,q^n)$ for $n=8,10$.
	The polynomials described in this paper lead to  a new infinite family 
	of MRD-codes in $\F_q^{n\times n}$ with minimum distance $n-1$ for any odd $q$ if $n\equiv0\pmod4$
	and any $q\equiv1\pmod4$ if $n\equiv2\pmod4$.
\end{abstract}

\section{Introduction and preliminaries}

Let $\Fqn$ be the Galois field of order $q^n$, $q$ a prime power. An $\Fq$-\emph{linearized polynomial}, or \emph{$q$-polynomial}, over $\Fqn$ is a polynomial of the form
$$f(x) =\sum_{i=0}^{k}c_i x^{q^i}\in\Fqn[x], \quad k \in \mathbb{N}.$$ 
If $c_k \neq 0$, the integer $k$ is called the \emph{$q$-degree} of $f$, 
in short $\deg_q(f)$.
It is well known that any linearized polynomial defines an endomorphism of $\F_{q^n}$, when $\F_{q^n}$ is regarded as an $\F_q$-vector space and, vice versa, each element of $\End_{\F_q}(\F_{q^n})$	can be represented as a unique linearized polynomial over $\F_{q^n}$ of $q$-degree less than $n$, see \cite{lidl}.\\
For a $q$-polynomial $f(x) = \sum_{i=0}^{n-1}c_i x^{q^i}$ 
over $\Fqn$, let $D_f$ denote the associated \textit{Dickson matrix} (or $q$-\textit{circulant matrix} )
\begin{equation}\label{dickson}
D_f=
\begin{pmatrix}

a_0 & a_1 &\ldots & a_{n-1}\\
a^q_{n-1} & a^q_0 & \ldots & a^q_{n-2}\\
\vdots & \vdots & \vdots & \vdots \\
a_1^{q^{n-1}} & a_{2}^{q^{n-1}} & \ldots & a_0^{q^{n-1}}
\end{pmatrix}.
\end{equation}

The rank of the matrix $D_f$ is the rank of the  $\Fq$-linear map $f(x)$, see \cite{WuLiu}.\\
Among the linearized polynomials over a finite field, a particular class has recently aroused interest for its connections with finite geometry and with coding theory: that of scattered polynomials.

More precisely, a  \textit{scattered} $q$-\textit{polynomial} $f(x) \in \Fqn[x]$ has the property that the polynomial
$f(x)+mx$
has at most $q$ roots in $\Fqn$ for all $m \in \Fqn$, or equivalently, if  for any $y,z \in \Fqn^*$ 
the condition
\begin{equation}\label{scattered}
\frac{f(y)}{y}=\frac{f(z)}{z}
\end{equation}
implies that $y$ and $z$ are $\Fq$-linearly dependent.
The condition for a $q$-polynomial to be scattered can be equivalently stated in terms of Dickson matrices 
\cite{Cs2018,Z}.
Polynomials of this sort are linked to particular subsets of the finite projective line $\PG(1, q^n)$ called \textit{maximum scattered linear sets}.
Then consider  the finite projective line
$\Lambda=\PG(\Fqn^2, \Fqn ) \cong \PG(1, q^n)$. 
A set $L$ of points 
in $\Lambda$  is called $\Fq$-\textit{linear set}
(or just \emph{linear set}) of \textit{rank} $k$ if it consists of the points 
defined by the nonzero vectors of an $\Fq$-subspace $U$ of $\Fqn^2$ of dimension $k$, i.e.
$$L=L_U=\{\la \textbf{u} \ra_{\Fqn} \,:\, \textbf{u} \in U \setminus \{\textbf{0}\} \}.$$
Two linear sets $L_U$ and $L_W$ of $\PG(1, q^n)$ are said to be $\PGaL$-\textit{equivalent} if there is an 
element $\varphi \in \PGaL(2, q^n)$ such that $L_U^\varphi = L_W$. It is clear that if $U$ and $W$ are on the 
same $\GaL(2,q^n)$-orbit, then $L_U$ and $L_W$ are $\PGaL$-equivalent, but the converse statement is not true 
in general.
For further details see \cite{CMP,CsZ2015}.

The set $L_U$ is called $\Fq$-linear set of $\mathcal{Z}(\GaL)$-\textit{class} $r$ if $r$ is the greatest 
integer such that there exist $\Fq$-subspaces $U_1,U_2,\ldots,U_r$ of $\Fqn^2$ such that $L_{U_i}= L_U$ for 
$i \in\{ 1,2 ,\ldots,r\}$ and $U_i \not = \lambda U_j$ for any $\lambda \in \Fqn^*$ and distinct 
$i,j \in \{1,2,\ldots,r\}$. 
Furthermore, $L_U$ is of $\GaL$-\textit{class} $s$ if $s$ is the greatest integer such that there exist 
$\Fq$-subspaces $U_1,U_2,\ldots,U_s$ of $\Fqn^2$ with $L_{U_i}=L_U$ for $i \in \{1,2,\ldots,s\}$, but $U_i$ 
and $U_j$ are not on the same $\GaL(2,q^n)$-orbit for $i,j\in\{1,2,\ldots,s\}$, $i\neq j$.
In particular, if $s=1$, then 	$L_U$ is called a \textit{simple} $\Fq$-linear set.

The scattered $q$-polynomials arise from some $\Fq$-linear sets  in $\PG(1,q^n)$. 
An $\Fq$-linear set of rank $k$ and size $(q^k-1)/(q-1)$ in $\PG(1,q^n)$ is called
\textit{scattered}.
A scattered $\Fq$-linear set of rank $n$ is  called \textit{maximum scattered linear set}.
As shown in \cite{BL2000}, these  are the linear sets of maximum size distinct from $\PG(1, q^n)$.
If $L_U$ is an $\Fq$-linear set of rank $n$ of $\PG(1, q^n)$,  it can always be assumed (up to a projectivity) 
that $L_U$ does not contain the point $\la (0, 1) \ra_{\Fqn}$. 
Then $U = U_f = \{(x, f(x)): x \in \Fqn \}$, 
for some  $q$-polynomial $f(x)$ over $\Fqn$ and for the sake of simplicity, we will write $L_f$ instead of $L_{U_f}$ to denote the linear set defined by $U_f$.
Clearly, $L_f$ is scattered if and only if $f(x)$ is a scattered $q$-polynomial.
The first examples of scattered polynomials were found by Blokhuis and Lavrauw in \cite{BL2000}  and
by Lunardon and Polverino in \cite{LP2001} and then generalized by Sheekey in \cite{Sh}. 
Apart from these, very few examples are known. 
They are defined  for $n\le8$ and are summarized in Section  \ref{newLS}.
In view of the results in \cite{BarMon,BartoliZhou}, stating that the only families of scattered
$q$-polynomials defined for infinitely many $n$ and satisfying certain additional assumptions 
are those of Blokhuis-Lavrauw and Lunardon-Polverino,
it would seem that the scattered polynomials are quite rare.

As stated before, scattered polynomials attracted a lot of attention, especially because of their connection, established by Sheekey in \cite[Section 5]{Sh}, with rank distance codes.
These were introduced by Delsarte as $q$-analogs of the usual linear error correcting codes endowed with Hamming distance, \cite{Delsarte}.
Recently there has been a resurgence of interest in them 
because of their applications to random linear network coding and cryptography, see \cite{res1,res2}.
A \textit{rank distance code} (or RD-code for short) $\cC$ is a subset of the set of $m \times n$ matrices 
$\F^{m \times n}_q$ over
$\Fq$, the finite field of $q$ elements with $q$ a prime power, endowed with the distance function
$$d(A, B) = \rk (A - B)$$
for any $A, B  \in \F_q^{m \times n}$. The \textit{minimum distance} of an RD-code $\cC$, $|\cC|\geq 2$, is defined as
\begin{equation*}
d(\cC) = \min_{\underset{M \not = N}{M, N \in \cC}} d(M, N) \,.
\end{equation*}
A rank distance code of $\F_q^{m \times n }$ with minimum distance $d$ has \emph{parameters}
$(m, n, q; d)$. 
If  $\cC$ is an $\F_q$-linear subspace of $\F_q^{m \times n}$, then $\cC$ is called $\F_q$-\textit{linear} 
RD-code and its \emph{dimension} $\dim_{\F_q} \cC$ is defined to be the dimension of $\cC$ as 
a subspace over $\F_q$.

The \textit{Singleton-like bound}  \cite{Delsarte} for an $(m,n,q;d)$ RD-code $\cC$ is
$$|\cC| \leq q^{\max\{m,n\}(\min\{m,n\}-d+1)}.$$
If the size of the code $\cC$ meets this bound, then $\cC$ is a called  \textit{Maximum
	Rank Distance code}, MRD-\textit{code} for short.
In this paper only the case $m=n$ is considered; that is, only codes whose codewords are square matrices
are taken into account. 
Note that if $n = d$, then an MRD-code $\cC$ consists of $q^n$ invertible endomorphisms of $\Fqn$; such $\cC$  is called \textit{spread set} of $\End_{\Fq}
(\Fqn )$. 
In particular if $\cC$ 
is $\Fq$-linear, it is called a \textit{semifield spread set} of $\End_{\Fq}(\Fqn )$. 
These objects are related to semifields. For more details see \cite{LavPol,LZ}.

The \emph{adjoint code} of a rank code $\cC$ is $\cC^\top=\{C^t\colon C\in\cC\}$,
where $C^t$ denotes the transpose of the matrix $C$.
Two $\Fq$-linear codes $\cC$ and $\cC'$ are called \textit{equivalent} if
there exist $P,Q \in \GL(n, q)$ and a field
automorphism $\sigma$ of $\Fq$ such that 
$$\cC' = \{PC^\sigma Q \colon C \in \cC\}.$$
Furthermore, $\cC$ and $\cC'$ are \emph{weakly equivalent} if $\cC$ is equivalent to
$\cC'$ or to $(\cC')^\top$.

In general, it is difficult to decide whether two rank distance codes with the same
parameters are equivalent or not. 
Useful tools to face this problem are the left and right idealisers, 
see \cite{liebhold_automorphism_2016, LunTromZhou}. 
More precisely, the
\textit{left} and \textit{right idealisers} $L(\cC)$ and $R(\cC)$
of an RD-code $\cC \subseteq  \F_q^{n \times n}$ are
\[L(\cC) = \{X \in \F_q^{n\times n} \colon X C  \in \cC \,\,\textnormal{for all} \,\, C \in \cC\},\]
\[R(\cC) = \{Y \in \F_q^{n\times n} \colon  C Y  \in \cC \,\,\textnormal{for all}\,\, C \in \cC\},\]
respectively.

In this article a class of scattered linearized polynomials over $\Fqn$ will be introduced. Later, the connections to maximum scattered linear sets of the projective line and MRD-codes that arise from these polynomials  will be investigated. More precisely, in Theorem \ref{t:main}, it will proved that the polynomial 
\begin{equation}\label{e:primopsi}
2\psi(x)=x^q+x^{q^{t-1}}-x^{q^{t+1}}+x^{q^{2t-1}}\in\F_{q^{2t}},\quad t\ge3,
\end{equation}
is scattered for any odd $q$ if $t$ is even, and for $q\equiv1\pmod4$ if $t$ is odd. 
Some compositions of type $\psi\circ\psi\circ\cdots\circ\psi(x)$ are scattered as well.
For $t=3$, $\psi(x)$ is up to equivalence the polynomial dealt with in \cite{ZZ,BZZ}.

This paper is organized as follows.
In Section \ref{scattered-class}, the polynomials of type \eqref{e:primopsi} are investigated.
In Section \ref{newLS}, it is shown that this family of scattered polynomials provides maximum scattered 
linear 
sets that are not of pseudoregulus type for any even $n \geq 6$.
The related linear sets in $\PG(1,q^n)$ are proved to be $\PGaL$-equivalent to no previously known
linear set for $n=8,10$.

In the last Section, Sheekey's connection with the MRD-codes is described.
In Theorem \ref{t:finale} it is proved 
that the class
\eqref{e:primopsi} of linearized polynomials gives rise to maximum subsets of square matrices of any even 
order $n \geq 6$ with minimum rank distance $d=n-1$.
They are  not equivalent to
any previously known MRD-code.

\section{\texorpdfstring{A class of scattered $q$-polynomials}{A class of scattered q-polynomials}}
\label{scattered-class}

Throughout this paper $q$ denotes a power of a prime $p\neq2$,  $t \geq 3$, $t\in\mathbb N$, and $n=2t$. 
As usual, if $\ell$ divides $m$,
\[
\tr_{q^m/q^\ell}(x)=x+x^{q^\ell}+x^{q^{2\ell}}+\cdots+x^{q^{m-\ell}}\quad\mbox{and}\quad
\N_{q^m/q^\ell}(x)=x^{\frac{q^{m}-1}{q^{\ell}-1}}
\]
denote the \emph{trace} and the \emph{norm} of $x\in\F_{q^{m}}$ over $\F_{q^{\ell}}$.
Consider the following $q$-polynomials in $\Fqn[x]$: 
\[
\alpha_n(x)=\frac{\Trnt(x)^{q^{t-1}}}2 \quad \textnormal{and}\quad \beta_n(x)=\frac{(x-x^{q^t})^q}2.
\]
In the following the index $n$ will usually be omitted: $\alpha(x)=\alpha_n(x)$, $\beta_n(x)=\beta(x)$.
Note that $\alpha(x)^{q^t}=\alpha(x)$, $\beta(x)^{q^t}=-\beta(x)$ for any $x\in\Fqn$.

\begin{proposition}\label{simple_lemma}
	Let $\Fqn$ be the finite field of order $q^n$, $n=2t$, and consider
	$$W=\{x\in\Fqn\colon x+x^{q^t}=0\}.$$
	Then
	\begin{enumerate}[(i)]
		\item $\ker \alpha = \im \beta=W$;
		\item $\ker \beta=\im \alpha=\Fqt$;
		\item the additive group of $\Fqn$ is direct sum of $\Fqt$ and $W$;
		\item the product of two elements in $W$ is in $\Fqt$;
		\item For any $k\in\mathbb N$, $q\equiv 1 \pmod 4$, and $x\in\Fqn$, $x^{q^k+1}=1$ 
		 implies  $x\not\in W$.
	\end{enumerate}
\end{proposition}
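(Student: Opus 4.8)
The plan is to exploit that, since $n=2t$, the map $\sigma\colon x\mapsto x^{q^t}$ is an $\F_q$-linear involution of $\Fqn$ whose fixed space is $\Fqt$ and whose $(-1)$-eigenspace is exactly $W$; both $\alpha$ and $\beta$ are obtained from $\sigma$ by post-composing with the bijections $x\mapsto x^{q^{t-1}}$ (resp.\ $x\mapsto x^q$) and with division by $2$, which is invertible because $p\neq2$. Almost all of the statement then reduces to bookkeeping with this eigenspace decomposition, the one genuinely arithmetic point being $(v)$.

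First I would settle $(i)$ and $(ii)$. Since $x\mapsto x^{q^{t-1}}$ and halving are bijective, $\alpha(x)=0$ iff $x+x^{q^t}=0$, giving $\ker\alpha=W$; likewise $\beta(x)=0$ iff $x-x^{q^t}=0$, giving $\ker\beta=\Fqt$. Viewing $x\mapsto x+x^{q^t}=\Trnt(x)$ as a surjective $\F_q$-linear map $\Fqn\to\Fqt$ shows $\dim_{\F_q}W=2t-t=t$, and rank--nullity then gives $\dim_{\F_q}\im\alpha=\dim_{\F_q}\im\beta=t$. Because $\alpha(x)^{q^t}=\alpha(x)$ and $\beta(x)^{q^t}=-\beta(x)$ (already recorded before the statement), one has $\im\alpha\subseteq\Fqt$ and $\im\beta\subseteq W$; matching the dimensions $t$ upgrades both inclusions to equalities, completing $(i)$ and $(ii)$.

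For $(iii)$ it suffices to note $\Fqt\cap W=\{0\}$: an element in the intersection satisfies $x^{q^t}=x$ and $x^{q^t}=-x$, hence $2x=0$ and $x=0$ since $p\neq2$; as $\dim_{\F_q}\Fqt+\dim_{\F_q}W=t+t=n$, the sum is direct and exhausts $\Fqn$ (equivalently, $x=\frac{x+x^{q^t}}2+\frac{x-x^{q^t}}2$ is the explicit splitting). Part $(iv)$ is immediate from the eigenvalue description: if $u,v\in W$ then $(uv)^{q^t}=u^{q^t}v^{q^t}=(-u)(-v)=uv$, so $uv$ is $\sigma$-fixed, i.e.\ $uv\in\Fqt$.

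The substance is $(v)$, and I expect it to be the main obstacle since it is the only step where the hypothesis $q\equiv1\pmod4$ is used. Here I would argue by contradiction on the multiplicative order of $x$ through its $2$-adic valuation $\nu_2$. A nonzero $x\in W$ satisfies $x^{q^t}=-x$, hence $x^{q^t-1}=-1$; since $-1$ has order $2$, this pins down $\nu_2(\operatorname{ord}(x))=\nu_2(q^t-1)+1$. On the other hand $q\equiv1\pmod4$ forces $\nu_2(q^t-1)\ge2$, so $\nu_2(\operatorname{ord}(x))\ge3$. But $x^{q^k+1}=1$ would give $\operatorname{ord}(x)\mid q^k+1$, whereas $q\equiv1\pmod4$ also gives $q^k+1\equiv2\pmod4$, i.e.\ $\nu_2(q^k+1)=1$; thus $\nu_2(\operatorname{ord}(x))\le1$, a contradiction. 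The delicate point is precisely the computation $\nu_2(\operatorname{ord}(x))=\nu_2(q^t-1)+1$ extracted from $x^{q^t-1}=-1$, together with the observation that the congruence $q\equiv1\pmod4$ is exactly what separates the valuation $\ge2$ of $q^t-1$ from the valuation $1$ of $q^k+1$.
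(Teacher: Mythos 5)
Your proposal is correct and, on the only non-trivial part $(v)$, it is essentially the paper's argument: both proofs boil down to the same $2$-adic obstruction, namely that a nonzero $x\in W$ forces $\nu_2$ of the relevant exponent data to be at least $2$ while $q\equiv1\pmod4$ gives $\nu_2(q^k+1)=1$ (the paper phrases this by writing $W\setminus\{0\}$ as odd powers of $\omega^{(q^t+1)/2}$ for a generator $\omega$ of $\F_{q^n}^*$ and extracting the condition $2m(q^t-1)=(2\ell+1)(q^k+1)$, whereas you work directly with $x^{q^t-1}=-1$ and $\nu_2(\operatorname{ord}(x))$). The paper dismisses $(i)$--$(iv)$ as trivial; your explicit eigenspace/rank--nullity treatment of them is fine.
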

\begin{proof}
Only the last statement is non-trivial.
Let $\omega$ be a generator of the multiplicative group $\F_{q^n}^*$.
Then
\[
W=\{\omega^{(2\ell+1)(q^t+1)/2}\colon \ell=1,2,\ldots,q^t-1\}\cup\{0\}.
\]
In order to be a $(q^k+1)$-th root of the unity, the generic element of $W$ above must satisfy
\[
\frac{(2\ell+1)(q^t+1)}2(q^k+1)\equiv0\pmod{q^{2t}-1},
\]
whence an integer $m$ exists satisfying
\begin{equation}\label{e:con}
2m(q^t-1)=(2\ell+1)(q^k+1).
\end{equation}
Therefore, 4 must divide $q^k+1$. This implies that $q \equiv -1$ $\pmod 4$, a contradiction.\\
\end{proof}


\begin{remark}\label{r:qk1}
If $t$ and $k$ are odd and $q\equiv 3 \pmod 4$, then $(q^t-1)/2$ is odd and $(q^k+1)/4$ is an integer.
This implies that \eqref{e:con} has a solution with $2\ell+1=(q^t-1)/2$ and $m=(q^k+1)/4$.
Therefore, an $x\in W$ exists such that $x^{q^k+1}=1$.
\end{remark}

Now consider the $q$-polynomial
\begin{equation}\label{candidato}
\psi_n(x)=\alpha_n(x)+\beta_n(x).
\end{equation}
The index $n$ will be often omitted in what follows.

For any positive integer, 
\[
\psi^{(k)}(x)=\overbrace{\psi\circ\psi\circ\cdots\circ\psi}^k(x)
\]
will denote the $k$-fold
composition of $\psi$ with itself.
The polynomials $\alpha^{(k)}(x)$ and $\beta^{(k)}(x)$ are defined analogously.
All $q$-polynomials will be considered here as maps; that is, they are reduced modulo $x^{q^n}-x$.

\begin{proposition}
	The map $\psi(x)$ has order $n$.
\end{proposition}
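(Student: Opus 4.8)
The plan is to exploit the additive direct sum decomposition $\Fqn=\Fqt\oplus W$ from Proposition \ref{simple_lemma}(iii) and to show that $\psi$ preserves each summand, acting on it as a power of the Frobenius map. First I would observe that for $x\in\Fqt$ one has $x-x^{q^t}=0$, so $\beta(x)=0$ and hence $\psi(x)=\alpha(x)$; moreover $\Trnt(x)=2x$ gives $\alpha(x)=(2x)^{q^{t-1}}/2=x^{q^{t-1}}$, using $2^{q^{t-1}}=2$ in $\Fq$. Dually, for $x\in W$ one has $x+x^{q^t}=0$, so $\alpha(x)=0$ and $\psi(x)=\beta(x)$; here $x-x^{q^t}=2x$ yields $\beta(x)=(2x)^q/2=x^q$. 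Since $\im\alpha=\Fqt$ and $\im\beta=W$ by parts (i)--(ii), both summands are $\psi$-invariant, and on them $\psi$ acts respectively as $x\mapsto x^{q^{t-1}}$ and $x\mapsto x^q$.

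Next I would compute the order of $\psi$ on each summand separately. Because $\psi$ maps each summand into itself, $\psi^{(k)}$ restricts to the $k$-fold composite of the restricted map, so $\psi^{(k)}=\mathrm{id}$ on $\Fqn$ exactly when it is the identity on both $\Fqt$ and $W$. On $\Fqt$ the restriction is $x\mapsto x^{q^{(t-1)k}}$, which is the identity iff $(t-1)k\equiv0\pmod t$, i.e.\ iff $t\mid k$ since $t-1\equiv-1\pmod t$; hence the order there is $t$. On $W$ the restriction is $x\mapsto x^{q^k}$, and $\psi^{(k)}|_W=\mathrm{id}$ iff $W\subseteq\F_{q^{\gcd(k,n)}}$. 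Since $|W|=q^t$ forces $\gcd(k,n)\ge t$, the only divisors of $n=2t$ in play are $t$ and $2t$; and $W\cap\Fqt=\{0\}$ rules out $\gcd(k,n)=t$, so we must have $\gcd(k,n)=n$, i.e.\ $n\mid k$. Thus the order on $W$ is $n$.

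Finally I would combine the two: $\psi^{(k)}=\mathrm{id}$ iff $t\mid k$ and $n\mid k$, so the order of $\psi$ is $\mathrm{lcm}(t,2t)=2t=n$, which is the claim; invertibility (hence that $\psi$ is an automorphism) is then automatic, being a power of Frobenius on each summand. The only genuinely delicate point is the first paragraph, namely correctly identifying the two restricted maps. This hinges on the simplifications $\Trnt(x)=2x$ on $\Fqt$ and $x-x^{q^t}=2x$ on $W$ together with $2^{q^j}=2$ in $\Fq$; once the two Frobenius restrictions are in hand, the order computation is entirely routine.
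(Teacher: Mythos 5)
Your argument is correct and is essentially the paper's own proof in a slightly repackaged form: the paper's identities $\alpha^{(k)}(x)=\alpha(x)^{q^{(k-1)(t-1)}}$, $\beta^{(k)}(x)=\beta(x)^{q^{k-1}}$ and the induction $\psi^{(k)}=\alpha^{(k)}+\beta^{(k)}$ are exactly your statement that $\psi$ preserves the decomposition $\Fqn=\Fqt\oplus W$ and acts on the summands as $x\mapsto x^{q^{t-1}}$ and $x\mapsto x^q$. Your version has the minor merit of spelling out the minimality of the order via the lcm computation, which the paper leaves implicit in the explicit formula for $\psi^{(k)}$.
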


\proof First, $\alpha^{(k)}(x)=\alpha(x)^{q^{(k-1)(t-1)}}$ and $\beta^{(k)}(x)=\beta(x)^{q^{k-1}}$
 for any $k\in\mathbb N$, whence
\begin{equation}\label{alphaadn}
\alpha^{(n)}(x)=\Trnt(x)/2\quad\mbox{ and }\quad\beta^{(n)}(x)=\frac{x-x^{q^t}}2.
\end{equation}
Note that for any $k \in \mathbb{N}$ and $x\in\Fqn$, $\alpha^{(k)}(x) \in \Fqt$ and $\beta^{(k)}(x) \in W$.\\
Next, we prove by induction that for any $k \in \mathbb{N}$
\begin{equation}\label{k-iteration}
\psi^{(k)}(x)=\alpha^{(k)}(x)+ \beta^{(k)}(x).
\end{equation}
The induction base step ($k=1$) is clear by the definition of $\psi(x)$. 
Now suppose that the property in \eqref{k-iteration} holds for $k-1$; then
\begin{equation}
\begin{split}
\psi^{(k)}(x)= &\psi(\psi^{(k-1)}(x))=\psi(\alpha^{(k-1)}(x))+\psi(\beta^{(k-1)}(x))=\\
& \alpha(\alpha^{(k-1)}(x))+\beta(\alpha^{(k-1)}(x))+\alpha(\beta^{(k-1)}(x))+\beta(\beta^{(k-1)}(x)).
\end{split}
\end{equation}
By Proposition $\ref{simple_lemma}$ $(i)$ $(ii)$, we get $\eqref{k-iteration}$, that in view of \eqref{alphaadn} implies
$\psi^{(n)}(x)=x$.
\endproof
As a consequence of  \eqref{k-iteration},
    \begin{equation}\label{e:kfold}
    \psi^{(k)}(x)=\frac12 \bigl (x^{q^k}+x^{q^{t-k}}-x^{q^{t+k}}+x^{q^{2t-k}} \bigr )
    \end{equation}
    for any $0\leq k\leq t$; \eqref{e:kfold} can be extended to any $k\in\mathbb N$ by considering modulo $2t$
the exponents of $q$.\\

\begin{theorem}\label{t:main}
    Let $q$ be an odd prime power, $t\ge3$, and
    \[
    \psi(x)=\frac12 \bigl (x^{q}+x^{q^{t-1}}-x^{q^{t+1}}+x^{q^{2t-1}} \bigr )\in\F_{q^{2t}}[x].
    \]
    For $1\le k<2t$, the $k$-fold composition 
    $\psi^{(k)}(x)$ 
    is scattered if and only if one of the following holds: $(i)$ $t$ is even and $\gcd(k,t)=1$, or
	$(ii)$ $t$ is odd,  $\gcd(k,2t)=1$, and $q\equiv 1\pmod4$.
\end{theorem}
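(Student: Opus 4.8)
The plan is to exploit the decomposition $\Fqn=\Fqt\oplus W$ from Proposition \ref{simple_lemma}(iii) together with the explicit shape of $\psi^{(k)}$. Writing $x=u+w$ with $u\in\Fqt$, $w\in W$, a direct computation from \eqref{e:kfold} (using $u^{q^t}=u$ and $w^{q^t}=-w$) gives $\psi^{(k)}(x)=u^{q^{t-k}}+w^{q^k}$, so $\psi^{(k)}$ acts on the two summands as the inverse and direct $k$-th Frobenius powers. Fix $\delta\in W\setminus\{0\}$ and set $\rho=\delta^2\in\Fqt^*$ (Proposition \ref{simple_lemma}(iv)) and $\eta_k=\delta^{q^k-1}\in\Fqt^*$ (so $\delta^{q^k}=\eta_k\delta$). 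Writing $w=s\delta$ and a slope $m=c+e\delta$ with $c,s,e\in\Fqt$, the equation $\psi^{(k)}(x)=mx$ becomes, after separating the $\Fqt$- and $W$-components, the system
\[
u^{q^{t-k}}=cu+e\rho s,\qquad \eta_k s^{q^k}=eu+cs.
\]
Since $\psi^{(k)}$ is scattered iff $\psi^{(k)}(x)=mx$ has at most $q$ solutions for every $m$, the whole statement reduces to bounding the $\F_q$-dimension of the solution space of this system by $1$ for every pair $(c,e)$.

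The key arithmetic input is the norm $\N_{q^t/q}(\eta_k)$; using $\delta^{q^t-1}=-1$ and $\frac{q^k-1}{q-1}\equiv k\pmod2$ one gets $\N_{q^t/q}(\eta_k)=(-1)^k$. First I would dispose of the degenerate slopes. For $e=0$ the system splits into $u^{q^{t-k}}=cu$ and $\eta_k s^{q^k}=cs$, whose nonzero solution spaces have dimension $\gcd(k,t)$ over $\F_q$; hence $\gcd(k,t)=1$ is forced (otherwise a single slope already yields $q^{\gcd(k,t)}>q$ solutions), and the two equations have nonzero solutions for a common $c$ exactly when $\N_{q^t/q}(\eta_k)=1$, i.e. when $k$ is even. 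This gives both the necessity and, for odd $k$, the harmlessness of the split slopes, and explains why the hypotheses force $\gcd(k,t)=1$ together with $k$ odd (equivalently $\gcd(k,2t)=1$ when $t$ is odd).

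Next I would treat the slopes with $c=0$, $e\neq0$: eliminating $s$ shows that the whole line $\{u+s\delta\}$ collapses into solutions precisely when $e^{q^k+1}=\delta^{-(q^k+1)}$ admits a solution $e\in\Fqt^*$, equivalently when some $x\in W$ satisfies $x^{q^k+1}=1$. By Proposition \ref{simple_lemma}(v) and Remark \ref{r:qk1} this never happens for $t$ even (there $\gcd(q^k+1,q^t-1)=q+1$, and a short computation with $\delta^{q^t-1}=-1$ shows $\delta^{-(q^k+1)}$ is not a $(q^k+1)$-th power), whereas for $t$ odd it happens exactly when $q\equiv3\pmod4$, in which case a full $\Fqt$ of solutions appears and $\psi^{(k)}$ fails to be scattered. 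This is what pins down the extra hypothesis $q\equiv1\pmod4$ in case $(ii)$.

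The main obstacle is the generic case $c\neq0$, $e\neq0$, which I expect to be the technical heart. Here I would eliminate $s$ to obtain a single $\F_q$-linear equation $L(u)=\bar a\,u^{q^k}+\bar b\,u^{q^{t-k}}+\bar c\,u=0$ on $\Fqt$, with $\bar a=-\eta_k(e\rho)^{1-q^k}c^{q^k}$ and $\bar b=-c$. Viewing $\sigma\colon x\mapsto x^{q^k}$ as a generator of $\Gal(\Fqt/\F_q)$ (legitimate since $\gcd(k,t)=1$), the equation reads $\bar a\,\sigma(u)+\bar c\,u+\bar b\,\sigma^{-1}(u)=0$, a three-term recurrence along the $\sigma$-orbit $(u,\sigma u,\ldots,\sigma^{t-1}u)$. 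The transfer matrix $\Pi\in\GL(2,\Fqt)$ obtained by running this recurrence once around the $t$-cycle controls the solution space, and its determinant equals $\N_{q^t/q}(\bar b/\bar a)=\N_{q^t/q}(\eta_k)^{-1}=(-1)^k=-1$ for odd $k$. Consequently $1$ is an eigenvalue of $\Pi$ of multiplicity at most one, the space of periodic solutions is at most a line over $\Fqt$, and pulling it back through $u\mapsto(\sigma u,u)$ leaves at most $q$ values of $u$, hence at most $q$ solutions $x$. Collecting the three cases gives sufficiency, and the necessity established above gives the converse. The delicate points I would watch are the validity of the elimination when $c=0$ (handled separately above) and getting the sign of $\N_{q^t/q}(\eta_k)$ right, since that single sign simultaneously governs the split-slope count, the determinant of $\Pi$, and the $q\pmod4$ dichotomy.
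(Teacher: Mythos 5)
Your proposal is correct, and although it rests on the same decomposition $\Fqn=\Fqt\oplus W$ and the same identity $\psi^{(k)}(u+w)=u^{q^{t-k}}+w^{q^k}$ that the paper uses, its technical core is genuinely different. The paper works with the multiplier formulation ($\psi^{(k)}(\rho x)=\rho\psi^{(k)}(x)\Rightarrow\rho\in\Fq$), splits $\rho=h+r$ and $x=x_1+x_2$, and treats the resulting $2\times2$ system in the unknowns $r,r^{q^k}$ by Cramer's rule: the explicit solution forces $r\in\Fqt\cap W=\{0\}$, with the hypotheses on $q$ and $t$ entering only to exclude a vanishing determinant via Proposition \ref{simple_lemma}(v). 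You instead fix the slope $m=c+e\delta$ and count roots of $\psi^{(k)}(x)=mx$; in the generic case you reduce to a $\sigma$-trinomial $\bar a\,\sigma(u)+\bar c\,u+\bar b\,\sigma^{-1}(u)=0$ on $\Fqt$ and invoke the monodromy (Dickson-matrix/Gow--Quinlan) criterion, in the spirit of \cite{Cs2018,Z}: since $\N_{q^t/q}(\bar b/\bar a)=\N_{q^t/q}(\eta_k)^{-1}=(-1)^k=-1$, the solution space has $\Fq$-dimension at most one. What your route buys is that the single sign $\N_{q^t/q}(\eta_k)=(-1)^k$ governs all three slope cases and both directions of the equivalence at once, making the necessity part (notably the failure for even $k$) more transparent than the paper's ad hoc constructions. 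Two spots deserve to be written out: (a) the claim that for $t$ even no $x\in W$ satisfies $x^{q^k+1}=1$ is not covered by Proposition \ref{simple_lemma}(v), which assumes $q\equiv1\pmod4$; it does hold for every odd $q$ because in \eqref{e:con} the exact power of $2$ dividing $(2\ell+1)(q^k+1)$ is that dividing $q+1$ (as $k$ is odd), whereas $2m(q^t-1)$ is divisible by four times that power when $t$ is even, so \eqref{e:con} has no integer solution; (b) the step from ``the fixed vectors of $\Pi$ form at most an $\Fqt$-line'' to ``at most $q$ values of $u$'' needs the remark that if $(\sigma(u),u)$ and $(\sigma(u'),u')$ lie on the same $\Fqt$-line then $u/u'$ is fixed by $\sigma$, hence lies in $\Fq$. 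Both are routine, so the argument stands as a valid alternative proof.
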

\begin{proof}
The first part of the proof is devoted to prove that any of the conditions $(i)$ and $(ii)$ implies that 
$\psi^{(k)}(x)$ is scattered.
It is straightforward to see that the condition for $\psi^{(k)}(x)$ to be scattered can be 
rephrased in this way: if
\begin{equation}\label{e:psirx}
\psi^{(k)}(\rho x)=\rho\psi^{(k)}(x),\quad x,\rho\in\Fqn,\ x\neq0,
\end{equation}
then $\rho\in\Fq$.
By Lemma \ref{simple_lemma}~$(iii)$, 
for any $\rho\in\Fqn$ there are precisely two elements $h=h_\rho\in\Fqt$ and $r=r_\rho\in W$ such that 
$\rho=h+r$.
Condition \eqref{e:psirx} is equivalent to
\begin{equation}\label{notsplit}
\alpha^{(k)}((r+h)(x_1+x_2))+\beta^{(k)}((r+h) (x_1+x_2))= (r+h)(\alpha^{(k)} ( x_1+x_2)+\beta^{(k)}
(x_1+x_2)),
\end{equation}
with $x=x_1+x_2$, where $x_1\in \F_{q^t}$ and $x_2$ belongs to $W$.\\
By Proposition \ref{simple_lemma}~$(i)\ (ii)\ (iv)$, the expression in \eqref{notsplit} is reduced to
\begin{equation}\label{e:reduced}
\alpha^{(k)}(rx_2)+\alpha^{(k)}(hx_1)+\beta^{(k)}(rx_1)+\beta^{(k)}(hx_2)= (r+h)(\alpha^{(k)} ( x_1)+\beta^{(k)}(x_2)).
\end{equation}
Now, by expanding \eqref{e:reduced},
\begin{equation}
(rx_2)^{q^{k(t-1)}}+(hx_1)^{q^{k(t-1)}}+(rx_1)^{q^k}+(hx_2)^{q^k}= (r+h)( x^{q^{k(t-1)}}_1+x^{q^k}_2).
\end{equation}
Since $W$ and $\F_{q^t}$ meet in the trivial space, one obtains
\begin{equation*}
\begin{cases}
rx^{q^k}_2-r^{q^{k(t-1)}}x_2^{q^{k(t-1)}}=(h^{q^{k(t-1)}}-h)x^{q^{k(t-1)}}_1\\
r^{q^k}x^{q^k}_1-rx^{q^{k(t-1)}}_1=(h-h^{q^k})x^{q^k}_2.
\end{cases}
\end{equation*}
Raising the first equation to its $q^k$-power,
\begin{equation}\label{system}
\begin{cases}
r^{q^k}x^{q^{2k}}_2-rx_2=(h-h^{q^k})x_1\\
r^{q^k}x^{q^k}_1-rx^{q^{k(t-1)}}_1=(h-h^{q^k})x^{q^k}_2.
\end{cases}
\end{equation}
This can be seen as a linear system in the unknowns $r$ and $r^{q^k}$.
In the following it will assumed that $r\neq0$, leading to a contradiction.

\begin{itemize}
	\item [-] \textsl{Case 1.} $x_1=0$: the linear system in \eqref{system} is reduced to
	\begin{equation}\label{system:x1=0}
	\begin{cases}
	r^{q^k}x^{q^{2k}}_2-rx_2=0\\
	(h-h^{q^k})x^{q^k}_2=0
	\end{cases}
	\end{equation}
	Since $x_2 \neq 0$, the first equation in \eqref{system:x1=0}
	gives  $r^{q^k-1}=(x_2^{-1})^{q^{2k}-1}$.
	Then there exists $\mu\in\F_{q^k}^*\cap\Fqn=\F_q^*$ such that
	\[
	r=\mu(x_2^{-1})^{q^k+1}.
	\]
	The right hand side of the last equation belongs to $\F_{q^t}^*$, that is a contradiction.
	\item[-] \textsl{Case 2.} $x_2=0:$ as before, the linear system in \eqref{system} is reduced to
	\begin{equation}\label{system:x2=0}
	\begin{cases}
	(h-h^{q^k})x_1=0\\
	r^{q^k}x^{q^k}_1-rx^{q^{k(t-1)}}_1=0
	\end{cases}
	\end{equation}
	Since $x_1 \neq 0$, by the second equation in \eqref{system:x2=0}, the existence of a 
	$\lambda\in\F_q^*$ follows such that
	$r=\lambda(x_1^{q^k})^{q^{k(t-3)}+q^{k(t-4)}+\ldots+q^k+1}$, implying $r\in\F_{q^t}^*$, a contradiction.
	
	Note that (again under the assumption $r\neq0$) $h\not\in\Fq$, since for $h\in\Fq$ the same arguments as above lead to a contradiction.

	\item [-] \textsl{Case 3.} $x_1\neq0\neq x_2$.
	The first goal is to prove that the determinant 
	$D=x^{q^k}_1x_2-x^{q^{k(t-1)}}_1x^{q^{2k}}_2$ is not zero.
		
	If $t$ is even, $D$ cannot be zero, otherwise the existence of a 
	$\lambda \in \F_{q^2}$ would follow 
	such that $x_2=\lambda(x_1^{-q^k})^{q^{k(t-4)}+\ldots+ q^{2k}+1}$, implying 
	$x_2 \in \Fqt$, a contradiction.
	
	If $t$ is odd, $q\equiv 1\pmod4$, and $D=0$, a necessary condition for \eqref{system} to
	have a solution is
	\[
	\det\begin{pmatrix} x_2^{q^{2k}}&(h-h^{q^k})x_1\\ x_1^{q^k}&(h-h^{q^k})x_2^{q^k}\end{pmatrix}=0,
	\]
	leading to $(x_2^{q^k}/x_1)^{q^k+1}=1$.
	Since $x_2^{q^k}/x_1\in W$, this contradicts Proposition  \ref{simple_lemma}~(\emph v).

	Therefore, $D\neq0$. Obtaining $r^{q^k}$ and $r$ from $\eqref{system}$, 
	
\[\begin{split}
r^{q^k}=(h-h^{q^k})\frac{x^{q^k+1}_2-x^{q^{k(t-1)}+1}_1}{x^{q^k}_1x_2-x^{q^{k(t-1)}}_1x^{q^{2k}}_2}
\quad\mbox{and}\\
r=(h-h^{q^k})\frac{x_2^{q^{2k}+q^k}-x^{q^k+1}_1}{x_1^{q^k}x_2-x^{q^{k(t-1)}}_1x^{q^{2k}}_2}.
\end{split}\]
	Therefore,
	$$r^{q^k-1}=\frac{x^{q^k+1}_2-x^{q^{k(t-1)}+1}_1}{x_2^{q^{2k}+q^k}-x^{q^k+1}_1}.$$
	Note that $y=x_2^{q^{2k}+q^k}-x^{q^k+1}_1$ belongs to $\Fqt$, and
	$r^{q^k-1}=y^{q^{k(t-1)}-1}$.
	Then there exists $\lambda \in \F_{q}$ such that 
	$r=\lambda y^{q^{k(t-2)}+q^{k(t-3)}+\cdots+q^k+1}$, implying $r \in \Fqt$, a contradiction again.
	
	Hence, the condition $r\neq0$ yields in all cases a contradiction.
	Taking into account \eqref{system}, $h$ must belong to $\Fq$, implying $\rho\in\Fq$.
	This concludes the proof of the sufficiency.	
\end{itemize}

If $d=\gcd(k,t)\neq1$ then $\psi^{(k)}(x)$ is a $q^d$-polynomial, hence it is not scattered
as a $q$-polynomial. So, $\gcd(k,t)=1$ is a necessary condition.

Assume $k$ is even; then it may be assumed that $t$ is odd.
An $x\in\F_{q^2}^*\subseteq\F_{q^n}$ belongs to $W$ if and only if $x^{q-1}=-1$ which is an equation
admitting $(q-1)$ solutions. Then fix $\mu\in W \cap \F_{q^2}^*\subseteq\F_{q^n}$ and 
$x_2\in W\setminus\{0\}$ arbitrarily.
A solution of \eqref{system} is $x_1=h=0$, $r=\mu(x_2^{-1})^{q^k+1}$ and this implies that
$\psi^{(k)}(x)$ is not scattered.

For odd $t$ and $k$ and $q\equiv 3\pmod4$, the $q$-polynomial $\psi^{(k)}(x)$ is not scattered. 
Indeed, taking $x_1=1$, $x_2\in W$ such that $x_2^{q^k+1}=1$,
	the equations in \eqref{system} are equivalent for any $h\in\Fqt$.
	The images of the $\Fq$-linear maps $r\in W\mapsto r^{q^k}x_2^{q^{2k}}-rx_2\in \F_{q^t}$
	and $h\in\F_{q^t}\mapsto (h-h^{q^k}) x_1\in \F_{q^t}$ both have $\Fq$-dimension at least $t-1$;
	this implies that their intersection is not trivial, and $r\in W$, $h\in\F_{q^t}$
	exist such that $r\neq0$ and \eqref{system} is satisfied.
\end{proof}

		\begin{remark}
			Note that for $n=6$, $2\psi^{(5)}(x)$ is the polynomial described in \cite{ZZ}, that for
			$q\equiv 1\pmod4$ and it is associated with the scattered $\Fq$-linear set $L_h^{5,6}\subseteq\PG(1,q^6)$,
			$h^2=-1$ that will be described in Section \ref{newLS}. Furthermore,
			$\psi^{(5)}(x)$ and $\psi(x)$ determine the same linear set, since $\psi^{(5)}(x)$ is the adjoint map of 
			$\psi(x)$ with respect to the bilinear form $\tr_{q^6/q}(xy)$  (cf.\ \cite{BaGiMaPo,CMP}).
\end{remark}

\section{On the equivalence issue for linear sets }\label{newLS} 

By definition, 
\[
L_{\psi_n^{(k)}}=\{\la(x,\psi_n^{(k)}(x))\ra_{\Fqn}\colon x\in\F_{q^n}^*\}
\]
denotes the maximum scattered $\Fq$-linear set of $\PG(1,q^n)$ associated with
$\psi_n^{(k)}(x)\in\F_{q^n}[x]$, provided that the assumptions of Theorem \ref{t:main} are satisfied.
The related $\Fq$-vector subspace of $\F_{q^n}^2$ is
\[
U_{\psi_n^{(k)}}=\{(x,\psi_n^{(k)}(x))\colon x\in\F_{q^n}\}.
\]
Since the collineation $(a,b)\in\F_{q^n}^2\mapsto(a^p,b^p)\in\F_{q^n}^2$ stabilizes $U_{\psi_n^{(k)}}$,
an $\Fq$-subspace of  $\F_{q^n}^2$ is $\GaL(2,q^n)$-equivalent to $U_{\psi_n^{(k)}}$ if and only if
it is $\GL(2,q^n)$-equivalent to $U_{\psi_n^{(k)}}$.

\begin{proposition}\label{GAL_psi}
Let $t\ge3$ and assume that $\psi_n^{(k)}(x)$ and $\psi_n^{(m)}(x)$ are scattered, $1\le k,m<2t=n$.
Then the $\Fq$-subspaces $U_{\psi_n^{(k)}}$ and $U_{\psi_n^{(m)}}$ are equivalent under the action
of $\GaL(2,q^n)$ if and only if $k=m$ or $k=n-m$.
\end{proposition}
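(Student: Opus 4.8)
The plan is to reduce the equivalence to a single functional identity and then to read it off on the canonical decomposition $\Fqn=\Fqt\oplus W$ of Proposition \ref{simple_lemma}. By the paragraph preceding the statement, $\GaL(2,q^n)$-equivalence of $U_{\psi_n^{(k)}}$ and $U_{\psi_n^{(m)}}$ coincides with $\GL(2,q^n)$-equivalence, so first I would look for an invertible matrix $\left(\begin{smallmatrix}a&b\\c&d\end{smallmatrix}\right)\in\GL(2,q^n)$ whose induced linear map sends $U_{\psi^{(k)}}$ to $U_{\psi^{(m)}}$. Spelling out what this means, it is equivalent to the identity
\[
\psi^{(m)}\!\bigl(a\,x+b\,\psi^{(k)}(x)\bigr)=c\,x+d\,\psi^{(k)}(x)\qquad\text{for all }x\in\Fqn,
\]
together with the requirement that $x\mapsto a\,x+b\,\psi^{(k)}(x)$ be a bijection.

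The sufficiency is the easy half. For $k=m$ there is nothing to prove. For $k=n-m$ I would exploit that $\psi$ has order $n$: from $\psi^{(m)}\circ\psi^{(n-m)}=\psi^{(n)}=\mathrm{id}$ the map $\psi^{(n-m)}$ is the compositional inverse of $\psi^{(m)}$, so the coordinate swap $(x,y)\mapsto(y,x)$, which lies in $\GL(2,q^n)$, carries $U_{\psi^{(m)}}=\{(x,\psi^{(m)}(x))\}$ onto $\{(\psi^{(m)}(x),x)\}=\{(y,\psi^{(n-m)}(y))\}=U_{\psi^{(n-m)}}=U_{\psi^{(k)}}$. This matches the adjointness recorded in the Remark, $\psi^{(n-m)}$ being simultaneously the inverse and the adjoint of $\psi^{(m)}$.

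For the necessity I would use that, by \eqref{k-iteration} and the formulas for $\alpha^{(k)},\beta^{(k)}$, the map $\psi^{(k)}$ acts on $\Fqt$ as $x\mapsto x^{q^{-k}}$ and on $W$ as $x\mapsto x^{q^{k}}$. Decomposing $a,b,c,d$ and each $x=x_1+w$ (with $x_1\in\Fqt$, $w\in W$) into $\Fqt$- and $W$-parts and using Proposition \ref{simple_lemma}~$(i),(ii),(iv)$, the displayed identity splits into four monomial identities, one for each pair of (domain, codomain) components in $\{\Fqt,W\}$. Reading the exponents of these identities modulo $t$ — on $\Fqt$ and on $W$ the $q$-Frobenius depends only on the exponent modulo $t$ — and invoking $\gcd(k,t)=\gcd(m,t)=1$ and $t\ge3$ to rule out $m\equiv0$, I would deduce $m\equiv\pm k\pmod t$, while $a,b,c,d$ are severely constrained (in one branch $a,d\in\Fqt$ and $b,c\in W$, with $c,d$ determined by $b,a$, and symmetrically in the other branch).

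The main obstacle is the passage from a congruence modulo $t$ to one modulo $n=2t$, that is, separating $m\equiv k$ from $m\equiv k+t$. The point is that $\Fqt$ and $W$ detect exponents only modulo $t$; the residue modulo $2t$ survives solely through the sign rule $\eta^{q^t}=-\eta$ for $0\neq\eta\in W$, which makes $w\mapsto w^{q^{j}}$ and $w\mapsto w^{q^{j+t}}$ opposite maps on $W$. Propagating these signs through the four block identities collapses the system to a solvability condition of the shape $a^{q^{2k}}=-a$ with $a\in\Fqt$ (together with a companion condition on the $W$-entry), and deciding this is exactly a root-of-unity condition of the type handled in Proposition \ref{simple_lemma}~$(v)$ and Remark \ref{r:qk1}. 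It is here that the arithmetic of $q$ and $t$ must be brought in to force the offending entries to vanish, leaving only the congruences $m\equiv k$ and $m\equiv-k\pmod{2t}$, i.e. $k=m$ or $k=n-m$. I expect this sign-bookkeeping-plus-solvability step to be the crux of the whole argument.
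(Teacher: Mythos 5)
Your route is the paper's route: reduce to $\GL(2,q^n)$-equivalence, get sufficiency from the coordinate swap (using that $\psi^{(n-m)}$ is the compositional inverse of $\psi^{(m)}$, so $U_{\psi^{(k)}}=\{(\psi^{(n-k)}(x),x)\}$), and attack necessity by decomposing both $x$ and the entries $a,b,c,d$ along $\Fqn=\Fqt\oplus W$ and comparing Frobenius exponents in the resulting component identities. The sufficiency half is complete and correct, and the reduction to $1\le k<m<t$ and the elimination of all cases with $k+m\neq t$ (where every coefficient of the four reduced polynomial identities must vanish, forcing $a=b=c=d=0$) are exactly as in the paper.

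The genuine gap is the one case you yourself flag as the crux and then only gesture at: $k+m=t$, i.e.\ $m\equiv-k\pmod t$. (Note that in the range $1\le k<m<t$ the branch $m\equiv k\pmod t$ cannot occur, so this is the \emph{only} surviving case, and it is where all the work lies.) Your plan to settle it via ``a solvability condition of the shape $a^{q^{2k}}=-a$ with $a\in\Fqt$'' handled by Proposition \ref{simple_lemma}~(v) and Remark \ref{r:qk1}, ``bringing in the arithmetic of $q$,'' points at the wrong tool: Proposition \ref{simple_lemma}~(v) concerns elements of $W$ with $x^{q^j+1}=1$, which is not the condition that arises, and in fact no hypothesis on the congruence class of $q$ enters at this point. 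What actually happens (writing $a=a_1+a_2$ with $a_1\in\Fqt$, $a_2\in W$, etc.) is that the four reduced identities force $b_2=c_2=a_1=d_1=0$ together with $d_2=a_2^{q^m}=a_2^{q^{t-m}}$ and $c_1=b_1^{q^{t-m}}=-b_1^{q^m}$, hence $a_2^{q^{2m}-1}=-1$ with $a_2\in W$ and $b_1^{q^{2m}-1}=-1$ with $b_1\in\Fqt$. The first relation puts $a_2$ in $\F_{q^{4m}}\cap\F_{q^{2t}}\setminus\F_{q^{2m}}$, which by $\gcd(m,2t)=1$ means $a_2\in\F_{q^4}\setminus\F_{q^2}$; combined with $a_2^{q^t}=-a_2$ this forces $t\equiv2\pmod4$, and then $b_1\in\F_{q^4}\cap\F_{q^t}=\F_{q^2}$ contradicts $b_1^{q^{2m}-1}=-1$. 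This field-membership and gcd bookkeeping on the coefficients is the missing argument; without it your sketch does not exclude the possibility $m=t-k$, and the statement would be false as you have left it.
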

\begin{proof}
Since
\[
U_{\psi_n^{(k)}}=\{(\psi_n^{(n-k)}(x),x)\colon x\in\F_{q^{n}}\},
\]
$U_{\psi_n^{(k)}}$ and $U_{\psi_n^{(n-k)}}$ are equivalent under the action
of $\GaL(2,q^n)$.
This allows to prove the necessity of the condition only for $1\leq k <m<t$.

Assume that $\begin{pmatrix}a&b\\ c&d\end{pmatrix}$ is an invertible matrix in $\F_{q^{n}}^{2\times2}$
such that for any $x\in\F_{q^{n}}$ a $z\in\F_{q^{n}}$ exists such that
\begin{equation}\label{e:nonsono}
\begin{pmatrix}a&b\\ c&d\end{pmatrix}\begin{pmatrix}x\\ \alpha^{(k)}(x)+\beta^{(k)}(x)\end{pmatrix}=
\begin{pmatrix}z\\ \alpha^{(m)}(z)+\beta^{(m)}(z)\end{pmatrix}.
\end{equation}
This implies
\begin{gather*}
cx+d\alpha^{(k)}(x)+d\beta^{(k)}(x)=\\
\alpha^{(m)}(ax+b\alpha^{(k)}(x)+b\beta^{(k)}(x))+\beta^{(m)}(ax+b\alpha^{(k)}(x)+b\beta^{(k)}(x))
\end{gather*}
for any $x\in\F_{q^{n}}$.
Decompose any $x\in\F_{q^{n}}$ as a sum $x=x_1+x_2$ with $x_1\in\Fqt$, $x_2\in W$.
The above equation splits in
\begin{equation}\label{e:nonsono2}
\begin{cases}
c_1x_1+c_2x_2+d_1x_1^{q^{t-k}}+d_2x_2^{q^k}=
(a_1x_1+a_2x_2+b_1x_1^{q^{t-k}}+b_2x_2^{q^k})^{q^{t-m}}\\
c_2x_1+c_1x_2+d_2x_1^{q^{t-k}}+d_1x_2^{q^k}=
(a_2x_1+a_1x_2+b_2x_1^{q^{t-k}}+b_1x_2^{q^k})^{q^{m}}.
\end{cases}
\end{equation}
Putting $x_2=0$ in \eqref{e:nonsono2} one obtains that for any $x_1\in\Fqt$
\begin{equation}\label{e:idp1}
\begin{cases}
c_1x_1+d_1x_1^{q^{t-k}}-a_1^{q^{t-m}}x_1^{q^{t-m}}-b_1^{q^{t-m}}x_1^{q^{n-k-m}}=0\\
c_2x_1+d_2x_1^{q^{t-k}}-a_2^{q^{m}}x_1^{q^{m}}-b_2^{q^m}x_1^{q^{t+m-k}}=0.
\end{cases}
\end{equation}
Similarly
\begin{equation}\label{e:idp2}
\begin{cases}
c_2x_2+d_2x_2^{q^k}-a_2^{q^{t-m}}x_2^{q^{t-m}}-b_2^{q^{t-m}}x_2^{q^{t+k-m}}=0\\
c_1x_2+d_1x_2^{q^k}-a_1^{q^{m}}x_2^{q^{m}}-b_1^{q^{m}}x_2^{q^{k+m}}=0
\end{cases}
\end{equation}
must hold for any $x_2\in W$.
After reducing modulo $x^{q^t}-x$ in \eqref{e:idp1} and modulo $x^{q^t}+x$ in
\eqref{e:idp2}, four polynomials are obtained all whose coefficients must be zero.

If $k+m\neq t$, the first identity in \eqref{e:idp1} implies $a_1=b_1=c_1=d_1=0$, and the
first of \eqref{e:idp2} implies $a_2=b_2=c_2=d_2=0$, a contradiction.

Finally assume $k+m=t$.
The second of \eqref{e:idp1} gives $b_2=c_2=0$ and $d_2=a_2^{q^{m}}$;
the first of \eqref{e:idp2} implies $d_2=a_2^{q^{t-m}}$.
As a result, $a_2^{q^{2m}-1}=-1$; $a_2\in\F_{q^{4m}}^*\cap\F_{q^{2t}}$.
Since $m$ and $2t$ are relatively prime, $a_2\in\F_{q^4}\setminus\F_{q^2}$, whence
$t\equiv 2\pmod 4$.
The first of \eqref{e:idp1} gives $a_1=d_1=0$ and $c_1=b_1^{q^{t-m}}$;
the second of \eqref{e:idp2} implies $c_1=-b_1^{q^m}$.
As a result, $b_1^{q^{2m}-1}=-1$;
so, $b_1\in\F_{q^{4}}\cap\F_{q^{t}}=\F_{q^2}$ and this is a contradiction.
\end{proof}

A question that remains open is whether $L_{\psi_n^{(k)}}$ and 
$L_{\psi_n^{(m)}}$ are $\PGaL$-equivalent for $1 \leq k < m <t$.

In order to decide whether the linear sets $L_{\psi_n^{(k)}}$ are new, they 
will be compared to the known maximum scattered linear 
sets in $\PG(1,q^n)$.
The known non-equivalent (under the action of  $\GaL(2,q^n)$) \emph{maximum scattered subspaces} 
of $\F^2_{q^n}$,  i.e.\  subspaces defining maximum  scattered linear sets, are listed below.
\begin{itemize} 
	\item [1.] $U^{1,n}_{s}=\{(x,x^{q^s}): x \in \Fqn\}, 1 \leq s \leq n-1,\gcd(s,n)=1$ 
	\cite{BL2000,CsZ20162},
	\item [2.] $U^{2,n}_{s,\delta}=\{(x,\delta x^{q^s}+x^{q^{n-s}}): x \in \Fqn\}$,
	$n\ge4$, $\N_{q^n/q}(\delta) \not \in \{0,1\}$, $\gcd(s,n)=1$ \cite{LP2001,LTZ,Sh},
	\item [3.] $U_{s,\delta}^{3,n}=\{(x,\delta x^{q^s}+x^{q^{s+n/2}}):x \in \Fqn\}, n \in \{6,8\}, \gcd(s,n/2)=1,\N_{q^n/q^{n/2}}(\delta) \not \in \{0,1\}$, for some $\delta$ and $q$ \cite{CMPZ},
	\item [4.] $U_{\delta}^{4,6} = \{(x, x^q + x^{q^3}+ \delta x^{q^5}): x \in \F_{q^6} \}$, $q$ odd and $\delta^2+\delta= 1$, see \cite{CsMZ2018} for $q \equiv 0, \pm 1 (\mod 5)$, and \cite{MMZ} for the remaining congruences of $q$,
	\item [5.] $U_h^{5,6}=\{(x,h^{q-1}x^q-h^{q^2-1}x^{q^2}+x^{q^4}+x^{q^5})\colon x\in\F_{q^6}\}$,
	$h\in\F_{q^6}$, $h^{q^3+1}=-1$, $q$ odd \cite{BZZ,ZZ}.
\end{itemize}

To make notation easier, $L^{i,n}_{s}$, $L^{i,n}_{s,\delta}$, $L^{4,6}_{\delta}$, and $L_h^{5,6}$  will denote the $\Fq$-linear sets defined by $U^{i,n}_{s}$, $U^{i,n}_{s,\delta}$, $U^{4,6}_{\delta}$,
and $U_h^{5,6}$, respectively.
Moreover, the sets $L_s^{1,n}$ and  $L^{2,n}_{s,\delta}$ are called of  \textit{pseudoregulus type} and \emph{LP-type}, respectively.
As noted in the previous section, $L_{\psi_6}=L^{5,6}_h$ 
 where $h^2=-1$ for $q\equiv1$ $\pmod4$.
In order to understand if, under the assumptions of Theorem \ref{t:main}, the maximum scattered linear set  $L_{\psi_n^{(k)}}$ is of pseudoregulus type or of LP-type, some preliminary results have to be retraced.

First of all, Lunardon and Polverino in \cite[Theorem 1 and 2] {LuPo2004}, (see also \cite{LuPoPo2002}) showed that every linear set is  projection of a canonical subgeometry, where a \textit{canonical subgeometry} in $\PG(m - 1, q^n)$  is a linear set $L$ of rank $m$ such that $\langle L\rangle  = \PG(m - 1, q^n)$. In particular, this result in the projective line
case states that for each $\Fq$-linear set $L_U$ of the projective line $\Lambda = \PG(1, q^n)$ of rank $n$
there exists a canonical subgeometry $\Sigma= \PG(n-1, q)$ of $\Sigma^* = \PG(n-1, q^n)$,
and an $(n - 3)$-subspace $\Gamma$ of $\Sigma^*$ with  $\Gamma$ disjoint from $\Sigma$ and $\Lambda$ such that 
$$L_U = \textnormal{p}_{\Gamma,\Lambda}(\Sigma) = \{\langle \Gamma, P \rangle \cap \Lambda: P \in \Sigma\}.$$
In \cite{CsZ20162}, Csajb\'ok and the second author gave a characterization of the linear sets of pseudoregulus type as a particular projection of a canonical subgeometry showing the following 
\begin{theorem}
[\cite{CsZ20162}, Theorem 2.3]\label{t:2.3}
Let $\Sigma$ be a canonical subgeometry of $\PG(n-1, q^n)$, $q > 2$, $n \geq 3$. Assume that $\Gamma$ and $\Lambda$ are an $(n - 3)$-subspace and a
line of $\PG(n -1, q^n)$, respectively, such that $\Gamma \cap \Sigma \not = \emptyset \neq \Gamma \cap \Lambda$. 
Then the
following assertions are equivalent:
\begin{itemize}
	\item [$(i)$] the set $\textnormal{p}_{\Gamma,\Lambda}(\Sigma)$ is a scattered $\Fq$-linear set of pseudoregulus type;
	\item [$(ii)$] there exists a generator $\sigma$ of the subgroup $\PGaL(n,q^n)$ fixing $\Sigma$ pointwise and such that $\dim(\Gamma \cap \Gamma^\sigma)= n -4$;
	\item [$(iii)$] there exist a point $P_\Gamma$ and a generator $\sigma$ of the subgroup of $\PGaL(n, q^n)$ fixing $\Sigma$ pointwise, such that $\langle P_\Gamma, P_{\Gamma}^{\sigma},\ldots,P_\Gamma^{\sigma^{n-1}} \rangle=\PG(n-1,q^n)$, and
	
	$$\Gamma=\langle P_\Gamma, P_\Gamma^{\sigma},\ldots,P_\Gamma^{{\sigma}^{n-3}} \rangle.$$
	
\end{itemize}
\end{theorem}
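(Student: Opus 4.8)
The plan is to fix homogeneous coordinates adapted to $\Sigma$ and to translate all three assertions into statements about the cyclic group fixing $\Sigma$ pointwise. First I would recall that the pointwise stabiliser of a canonical subgeometry $\Sigma=\PG(n-1,q)$ in $\PGaL(n,q^n)$ is cyclic of order $n$, generated by the Frobenius collineation $\Phi\colon\langle(x_0,\dots,x_{n-1})\rangle\mapsto\langle(x_0^q,\dots,x_{n-1}^q)\rangle$, its generators being exactly the $\Phi^j$ with $\gcd(j,n)=1$. Lifting to $V=\F_{q^n}^n$, write $\hat\Phi(v)=(v_0^q,\dots,v_{n-1}^q)$ and $\hat\sigma=\hat\Phi^j$. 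The elementary fact I would isolate is a Moore (Dickson-type) matrix criterion: for $P=\langle v\rangle$ and a generator $\sigma=\Phi^j$, the orbit $P,P^\sigma,\dots,P^{\sigma^{n-1}}$ spans $\PG(n-1,q^n)$ if and only if the coordinates of $v$ are $\F_q$-linearly independent, in which case $v,v^{\hat\sigma},\dots,v^{\hat\sigma^{n-1}}$ is a basis of $V$, so any $r$ of the orbit points span an $(r-1)$-subspace. Under the standing hypothesis that $\Gamma$ is disjoint from $\Sigma$ (so that $p_{\Gamma,\Lambda}(\Sigma)$ is a scattered linear set), the relevant generating vectors will be forced to be generic in exactly this sense.

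Granting this, $(iii)\Rightarrow(ii)$ is a direct count. If $\Gamma=\langle P_\Gamma,\dots,P_\Gamma^{\sigma^{n-3}}\rangle$ with $P_\Gamma$ having a spanning orbit, then $\Gamma$ and $\Gamma^\sigma=\langle P_\Gamma^{\sigma},\dots,P_\Gamma^{\sigma^{n-2}}\rangle$ are spanned by two overlapping strings of basis points that share the $n-3$ points $P_\Gamma^{\sigma},\dots,P_\Gamma^{\sigma^{n-3}}$; since the orbit is a basis, $\dim(\Gamma\cap\Gamma^\sigma)=n-4$.

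For the converse $(ii)\Rightarrow(iii)$, which carries the real weight, I would lift to $V$ and set $\hat\Gamma$ for the $(n-2)$-dimensional subspace with $\Gamma=\PG(\hat\Gamma)$; here $(ii)$ reads $\dim_{\F_{q^n}}(\hat\Gamma\cap\hat\Gamma^{\hat\sigma})=n-3$. I would form the descending chain $U_j=\bigcap_{i=0}^{j}\hat\Gamma^{\hat\sigma^i}$, so $U_0=\hat\Gamma$ and $\dim U_1=n-3$, and show the dimension drops by exactly one at each step, so that $U_{n-3}=\langle w\rangle$ is a line. Then $w\in\hat\Gamma^{\hat\sigma^i}$ for $0\le i\le n-3$ gives $w,w^{\hat\sigma^{-1}},\dots,w^{\hat\sigma^{-(n-3)}}\in\hat\Gamma$, and with $w'=w^{\hat\sigma^{-(n-3)}}$ and $P_\Gamma=\langle w'\rangle$ one obtains $\hat\Gamma=\langle w',(w')^{\hat\sigma},\dots,(w')^{\hat\sigma^{n-3}}\rangle$, which is $(iii)$. \textbf{The main obstacle} is precisely the propagation of the exact one-step drop together with the $\F_q$-linear independence of $w',(w')^{\hat\sigma},\dots,(w')^{\hat\sigma^{n-3}}$: both amount to ruling out that $w'$ be a semilinear eigenvector of $\hat\sigma$ (equivalently, that $\hat\Gamma$ meet a $\sigma$-fixed, i.e.\ $\Sigma$-rational, hyperplane). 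I would dispose of this using $q>2$ together with $\Gamma\cap\Sigma=\emptyset$. Passing to the dual space $V^*$, where $(ii)$ becomes $\dim(A\cap A^\tau)=1$ for the $2$-dimensional annihilator $A=\hat\Gamma^\perp$ and the contragredient $\tau$ of $\hat\sigma$, makes the bookkeeping transparent, since then $A=\langle u^{\tau^{-1}},u\rangle$ for $u$ spanning $A\cap A^\tau$, and the images $A^{\tau^i}=\langle u^{\tau^{i-1}},u^{\tau^i}\rangle$ are consecutive pairs of a single $\tau$-orbit.

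Finally, for $(i)\Leftrightarrow(iii)$ I would pass to normal form. Given $(iii)$, take the spanning orbit $P_\Gamma,\dots,P_\Gamma^{\sigma^{n-1}}$ as coordinate frame; then $\sigma$ becomes the Frobenius-twisted cyclic shift, $\Sigma$ becomes $\{\langle(x,x^q,\dots,x^{q^{n-1}})\rangle\colon x\in\F_{q^n}^*\}$, and $\Gamma=\langle e_0,\dots,e_{n-3}\rangle$. Choosing the complementary line $\Lambda=\langle e_{n-2},e_{n-1}\rangle$ (any other admissible $\Lambda$ yields a projectively equivalent image), a one-line computation identifies $p_{\Gamma,\Lambda}(\Sigma)$ with $\{\langle(y,y^{q})\rangle\colon y\in\F_{q^n}^*\}$, a linear set of pseudoregulus type; this is $(i)$. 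Conversely, a scattered linear set of pseudoregulus type is $\PGaL$-equivalent to this monomial model, from which one reads off a point with spanning $\sigma$-orbit whose first $n-2$ iterates span $\Gamma$, giving $(iii)$. All steps other than the exact-drop and eigenvector exclusion in $(ii)\Rightarrow(iii)$ are routine linear algebra, so I expect that to be the single genuinely delicate point.
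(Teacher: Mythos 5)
A preliminary remark: the paper you are working from does not prove this statement at all. It is imported from \cite{CsZ20162} (where it is Theorem 2.3) and used purely as a black box in Proposition \ref{p:s1}, so there is no internal proof to compare your attempt against; note also that the hypothesis as transcribed here is garbled and must read $\Gamma\cap\Sigma=\emptyset=\Gamma\cap\Lambda$, as you correctly assumed. Judged on its own, your sketch handles the easy implications correctly --- $(iii)\Rightarrow(ii)$ by the overlapping-strings count, $(iii)\Rightarrow(i)$ by passing to the monomial frame --- but the two steps where the theorem actually lives are left undone, so there is a genuine gap.

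First, $(i)\Rightarrow(iii)$: knowing that $\textnormal{p}_{\Gamma,\Lambda}(\Sigma)$ is abstractly $\PGaL$-equivalent to the monomial model only produces \emph{some} projecting triple in orbit form, whereas the assertion concerns the \emph{given} $\Gamma$; transporting the conclusion between two projecting configurations of the same linear set is precisely the hard content of the cited theorem, and it is where $q>2$ enters (for $q=2$ a maximum scattered linear set of $\PG(1,2^n)$ is just the complement of two points and admits projecting configurations not of orbit form). Your outline never invokes $q>2$ at any identifiable point, which is a reliable symptom that the essential argument is missing. Second, in $(ii)\Rightarrow(iii)$ the hypothesis $\Gamma\cap\Sigma=\emptyset$ does yield, via Galois descent (every nonzero $\hat\sigma$-invariant subspace contains $\hat\sigma$-fixed vectors, hence points of $\Sigma$), the independence of $w',(w')^{\hat\sigma},\dots,(w')^{\hat\sigma^{n-3}}$, since a dependence would place a nonzero invariant subspace inside $\hat\Gamma$. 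But it does \emph{not} yield the spanning condition $\langle P_\Gamma,\dots,P_\Gamma^{\sigma^{n-1}}\rangle=\PG(n-1,q^n)$ required in $(iii)$: the full orbit may span only a $\sigma$-invariant hyperplane $H\supseteq\Gamma$, and a hyperplane of $H$ can avoid the subgeometry $H\cap\Sigma$ entirely. Concretely, for $n=4$ take $u$ in the invariant hyperplane $x_0+x_1+x_2+x_3=0$ whose coordinates span a $3$-dimensional $\F_q$-space and set $\Gamma=\PG(\langle u,u^{\hat\sigma}\rangle)$: then $\Gamma\cap\Sigma=\emptyset$ and $\dim(\Gamma\cap\Gamma^{\sigma})=0=n-4$, yet the orbit of $u$ stays inside that hyperplane and the projection collapses a plane of $\Sigma$ to a single point, so it is not even scattered. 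Hence this step cannot be the pure linear algebra you describe; some further input (ultimately the scatteredness of the projection, i.e.\ a detour through $(i)$, or an additional clause present in the original formulation) is required, and your sketch does not supply it.
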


Therefore, using the Theorem above, one obtains 
 
\begin{proposition}\label{p:s1}
	For any $n\ge6$ and $k$ such that $\gcd(n,k)=1$,
	the linear set $L_{\psi_n^{(k)}}$ is not of pseudoregulus type.
\end{proposition}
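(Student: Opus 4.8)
The plan is to use the characterization of pseudoregulus type given in Theorem \ref{t:2.3}, specifically the equivalence of $(i)$ and $(ii)$. Suppose, for contradiction, that $L_{\psi_n^{(k)}}$ is of pseudoregulus type. Then there must exist a generator $\sigma$ of the cyclic group fixing the canonical subgeometry $\Sigma$ pointwise such that $\dim(\Gamma\cap\Gamma^\sigma)=n-4$, where $\Gamma$ is the $(n-3)$-subspace whose projection yields $L_{\psi_n^{(k)}}$. The concrete strategy is to make the projection picture fully explicit for our polynomial, compute the space $\Gamma$ attached to $\psi_n^{(k)}$, and then show that $\dim(\Gamma\cap\Gamma^\sigma)<n-4$ for \emph{every} generator $\sigma$ of the stabilizer of $\Sigma$.

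First I would set up coordinates so that $\Sigma=\PG(n-1,q)$ sits inside $\Sigma^*=\PG(n-1,q^n)$ as the subgeometry fixed by the standard Frobenius-type collineation, and identify the generators $\sigma$ of the pointwise stabilizer with the maps induced by $x\mapsto x^{q^j}$ for $\gcd(j,n)=1$. The point $P_\Gamma$ and the subspace $\Gamma=\langle P_\Gamma,P_\Gamma^\sigma,\dots,P_\Gamma^{\sigma^{n-3}}\rangle$ must be extracted from the specific shape of $U_{\psi_n^{(k)}}$. Because $\psi_n^{(k)}(x)=\tfrac12(x^{q^k}+x^{q^{t-k}}-x^{q^{t+k}}+x^{q^{2t-k}})$ has a precise list of exponents, the coefficient vector of $\Gamma$ (equivalently the Dickson matrix data of $\psi_n^{(k)}$) is explicitly known, and so is the vector obtained after applying a candidate $\sigma$, i.e.\ cyclically shifting those exponents.

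The core computation is then linear-algebraic: translate $\dim(\Gamma\cap\Gamma^\sigma)=n-4$ into the statement that the two $(n-3)$-dimensional subspaces share an $(n-4)$-dimensional intersection, equivalently that $\langle\Gamma,\Gamma^\sigma\rangle$ has dimension $n-2$ rather than $n-1$. I would encode $\Gamma$ and $\Gamma^\sigma$ via the $4$-term support of $\psi_n^{(k)}$ and its $\sigma$-image, and count the effective number of independent ``missing'' directions. Since $\psi_n^{(k)}$ has four nonzero exponents spread at positions $k,t-k,t+k,2t-k$ (modulo $n$), applying any power shift and comparing supports should show that $\Gamma$ and $\Gamma^\sigma$ are ``too far apart,'' forcing $\dim(\Gamma\cap\Gamma^\sigma)\le n-5$; the four-term structure, as opposed to the single- or two-term structure of the genuine pseudoregulus case $U^{1,n}_s$, is exactly what obstructs the required near-maximal intersection.

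The main obstacle I anticipate is twofold. First, making the translation between the analytic object $\psi_n^{(k)}$ and the geometric objects $\Gamma,P_\Gamma,\sigma$ completely rigorous requires care, since the projection $\textnormal{p}_{\Gamma,\Lambda}$ is only determined up to the freedom in choosing $\Sigma$ and the identification of $\Lambda$; I would need to pin down a canonical choice (for instance following the explicit recipe in \cite{CsZ20162,LuPo2004}) so that the coefficients of $\psi_n^{(k)}$ directly become coordinates of $\Gamma$. Second, the condition $\dim(\Gamma\cap\Gamma^\sigma)=n-4$ must be ruled out \emph{uniformly} over all $n-\phi(n)$-many admissible generators $\sigma$, so the support-comparison argument has to be organized as a statement about the exponent set $\{k,t-k,t+k,2t-k\}\pmod n$ being incompatible, for every valid shift, with the single-orbit structure that Theorem \ref{t:2.3}$(iii)$ demands. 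Once this exponent-counting lemma is in place, the contradiction with $(ii)$ is immediate and the proposition follows for all $n\ge6$ and all $k$ coprime to $n$.
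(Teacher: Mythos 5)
Your proposal follows essentially the same route as the paper's proof: there, $L_{2\psi_n^{(k)}}$ is realized explicitly as the projection of the subgeometry $\{\langle(u,u^q,\dots,u^{q^{n-1}})\rangle_{\Fqn}\}$ from the vertex $\Gamma_k$ of equations $x_0=x_k+x_{t-k}-x_{t+k}+x_{n-k}=0$, the shift--Frobenius map is taken as generator $\sigma$ of the pointwise stabilizer of $\Sigma$, and one checks $\dim(\Gamma_k\cap\Gamma_k^{\sigma^m})<n-4$ for every $m$ --- precisely the independence count on the four-term exponent support that you outline, concluded via Theorem \ref{t:2.3}$(ii)$. (One minor slip: the stabilizer has $\varphi(n)$ generators, not $n-\varphi(n)$; the paper avoids the issue by checking all powers $\sigma^m$, $m=1,\dots,n-1$.)
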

\begin{proof}
Since the linear set $L_{2\psi^{(k)}_n}$ can be represented as the projection of the subgeometry $\Sigma$ whose points
	are of type $P_u=\la(u,u^q,\ldots,u^{q^{n-1}})\ra_{\Fqn}$, $u\neq0$, from the vertex $\Gamma_k$
	of equations $x_0=x_k+x_{t-k}-x_{t+k}+x_{n-k}=0$ onto the line 
	$\ell$ of equations $x_1=x_2=\ldots=x_{n-k-1}=x_{n-k+1}=\ldots=x_{n-1}=0$.
	For, the hyperplane of $\PG(n-1,q^n)$ joining $\Gamma_k$ and $P_u$ has equations
	\[
	  2\psi_n^{(k)}(u)x_0-u(x_k+x_{t-k}-x_{t+k}+x_{n-k})=0.
	\]
	Such hyperplane meets $\ell$ in the point all whose coordinates are zero except $x_0=u$,
	$x_{n-k}=2\psi_n^{(k)}(u)$.
	Let 
	\[
	\sigma:\la(x_0,x_1,\ldots,x_{n-1})\ra_{\Fqn}\mapsto\la(x_{n-1}^q,x_0^q,\ldots,x_{n-2}^q)\ra_{\Fqn},
	\]
	then $\sigma$ is a generator of the subgroup of P$\Gamma$L$(n,q)$ fixing $\Sigma$ pointwise.
	Since the dimension of $\Gamma_k\cap\Gamma_k^{\sigma^m}$ is less than $n-4$ for any $m=1,\ldots,n-1$,
	$L_{\psi_n^{(k)}}$ is not of pseudoregulus type by Theorem \ref{t:2.3}. 
\end{proof}

In \cite{ZZ}, the \emph{intersection number} of an $(n-3)$-subspace $\Gamma$ of $\PG(n-1,q^n)$
with respect to a collineation $\sigma$ fixing pointwise a $q$-order subgeometry $\Sigma$ such that 
$\Sigma\cap\Gamma=\emptyset$ has been defined as
\[
\intn_\sigma(\Gamma)=\min\{k\in\mathbb N\colon\dim(\Gamma\cap\Gamma^\sigma\cap\ldots\cap\Gamma^{\sigma^k})
>n-3-2k\}.
\]
By means of this notion and since the linear set $L^{2,n}_{s,\delta}$ has $\GaL$-class at most 2 for $n \in \{5, 6, 8\}$ (see \cite{CsMP2018} and \cite{CsMZ2018}), 
Zullo and the second author gave a characterization in term of projection for linear sets of LP-type. More precisely,
\begin{theorem}[\cite{ZZ}, Theorem 3.5,] \label{Theorem3.5}
Let $L$ be a maximum scattered linear set in $\Lambda = PG(1,q^n)$ with $n \leq 6$ or $n = 8$. Then $L$ is a linear set of LP-type if and only if for each $(n-3)$-subspace $\Gamma$ of $\PG(n-1,q^n)$ such that $L=\textnormal{p}_{\Gamma,\Lambda(\Sigma)}$, the following holds:
\begin{itemize}
	\item [$(i)$] there exists a generator $\sigma$  of the subgroup of $\PGaL(n, q^n)$ fixing $\Sigma$ pointwise, such that $\intn_{\sigma}(\Gamma) = 2$;
	\item [$(ii)$] there exist a unique point $P$ and some point $Q$ of $\PG(n -1, q^n)$ such that
	 $$\Gamma=\langle P, P^{\sigma},\ldots,P^{\sigma^{n-4}},Q \rangle,$$
	 and the line $\langle P^{\sigma^{n-1}},P^{\sigma^{n-3}}\rangle$ meets  $\Gamma$.
	
\end{itemize}

\end{theorem}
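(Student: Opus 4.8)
The plan is to exploit the Lunardon--Polverino representation recalled just above: every maximum scattered $L\subseteq\PG(1,q^n)$ equals $\textnormal{p}_{\Gamma,\Lambda}(\Sigma)$ for the canonical subgeometry $\Sigma$ with points $P_u=\la(u,u^q,\ldots,u^{q^{n-1}})\ra_{\Fqn}$ and some $(n-3)$-subspace $\Gamma$, where the pointwise stabilizer of $\Sigma$ in $\PGaL(n,q^n)$ is cyclic of order $n$, generated by the shift $\sigma$ of Proposition \ref{p:s1}. The decisive structural input is the bound on the $\GaL$-class: for $n\in\{5,6,8\}$ the linear set $L^{2,n}_{s,\delta}$ has $\GaL$-class at most $2$, and the cases $n\le4$ are treated directly. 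Consequently the admissible vertices $\Gamma$ fall into at most two orbits under the relevant group, represented by $U^{2,n}_{s,\delta}$ and its adjoint; since the adjoint of an LP-type subspace is again of LP-type, the quantifier ``for each $\Gamma$'' is reduced to checking the two explicit vertices.

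For the necessity (``only if'') I would take the explicit vertex $\Gamma$ of equations $x_0=\delta x_s+x_{n-s}=0$, exactly as in Proposition \ref{p:s1}. Writing the defining forms of $\Gamma^{\tau^j}$ for the generator $\tau=\sigma^s$, one gets the single coordinates $x_{js}$ together with $\delta^{q^{js}}x_{(j+1)s}+x_{n-s+js}$ (indices mod $n$). The point of choosing $\tau=\sigma^s$ is that the form with $j=1$ has support $\{2s,0\}$, both of which are covered by the single-coordinate forms only at step $k=2$: the four forms for $k=1$ remain independent, so $\dim(\Gamma\cap\Gamma^\tau)=n-5$, whereas at $k=2$ the palindromic relation $s+(n-s)\equiv0$ produces exactly one dependence, giving $\dim>n-7$ and hence $\intn_\tau(\Gamma)=2$, which is (i); this also explains the contrast with the pseudoregulus case, where $\intn=1$ by Theorem \ref{t:2.3}. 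For (ii) I would exhibit the generating point $P$ explicitly, read off that $\Gamma=\la P,P^\tau,\ldots,P^{\tau^{n-4}},Q\ra$, and verify by a direct incidence computation that the line $\la P^{\tau^{n-1}},P^{\tau^{n-3}}\ra$ meets $\Gamma$; the single dependence detected above pins down $P$ uniquely.

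The converse (``if'') is where I expect the main difficulty. Here one is given an arbitrary $\Gamma$ with $L=\textnormal{p}_{\Gamma,\Lambda}(\Sigma)$ together with a generator $\sigma$ and a point $P$ realizing (i)--(ii), and must recover the algebraic form of the associated subspace $U$. The strategy is to normalize coordinates so that $\Sigma$ is standard and $\sigma$ is the shift, use $\Gamma=\la P,P^\sigma,\ldots,P^{\sigma^{n-4}},Q\ra$ to fix $n-3$ of the generators of $\Gamma$ as a $\sigma$-orbit segment, and then translate $\intn_\sigma(\Gamma)=2$ together with the collinearity of $P^{\sigma^{n-1}},P^{\sigma^{n-3}}$ with $\Gamma$ into constraints on the two remaining linear forms cutting out $\Gamma$. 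These constraints should force the forms to have exactly the support $\{0\}$ and the symmetric pair $\{s,n-s\}$, so that, projecting back, $U$ is $\GaL$-equivalent to $\{(x,\delta x^{q^s}+x^{q^{n-s}})\}$ with $\N_{q^n/q}(\delta)\notin\{0,1\}$ guaranteed by scatteredness, i.e.\ $L$ is of LP-type.

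The hard part is the very last implication: ruling out every alternative support, that is, showing that no asymmetric pair of exponents and no genuine trinomial can satisfy both $\intn_\sigma(\Gamma)=2$ and the collinearity condition. This is precisely the step that forces the hypothesis $n\le6$ or $n=8$, since it is the $\GaL$-class bound for these $n$ that makes the correspondence between the admissible vertices $\Gamma$ and the subspaces $U$ controllable (at most two classes, interchanged by adjunction), so that the geometric data in (i)--(ii) determine $U$ up to the LP-normal form rather than leaving an uncontrolled family of possibilities.
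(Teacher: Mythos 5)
The first thing to note is that the paper does not prove this statement at all: Theorem \ref{Theorem3.5} is quoted verbatim from \cite{ZZ}, and the only proof-related content in the present paper is the one-line remark that the argument there rests on the $\GaL$-class of $L^{2,n}_{s,\delta}$ being at most $2$ for $n\in\{5,6,8\}$. So there is no internal proof to compare yours against; your attempt has to be judged as a reconstruction of the proof in \cite{ZZ}. Your overall architecture is reasonable --- the class bound to control the orbits of admissible vertices, an explicit computation of $\intn_{\sigma^s}(\Gamma)$ on the standard vertex $x_0=\delta x_s+x_{n-s}=0$ for necessity, and a normalization argument recovering the LP normal form for sufficiency --- and the $\intn=2$ computation is essentially correct: at $k=1$ the four forms reduce to $x_0,x_s,x_{2s},x_{n-s}$, which are independent for $n\ge4$ and $\gcd(s,n)=1$, and at $k=2$ the form $x_{2s}$ reappears, producing exactly one dependence.

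However, there are genuine gaps. First, the entire ``if'' direction is a plan, not a proof: the sentence that the constraints ``should force the forms to have exactly the support $\{0\}$ and the symmetric pair $\{s,n-s\}$'' \emph{is} the theorem, and you give no argument ruling out asymmetric exponent pairs or genuine trinomials; that elimination is the substantive content of the result and cannot be waved through. Second, in the ``only if'' direction the reduction of the quantifier ``for each $\Gamma$'' to two explicit vertices needs more than the $\GaL$-class bound: one must also know that conditions $(i)$ and $(ii)$ and the intersection numbers are invariant when $\Gamma$ is replaced by an equivalent vertex under an element of the normalizer of $\Sigma$ (this invariance is itself a lemma that has to be proved), and one must actually verify $(i)$--$(ii)$ on the second orbit representative rather than only assert that the adjoint of an LP-type subspace is again of LP-type. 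Third, the existence and uniqueness of the point $P$ in $(ii)$ and the incidence of the line $\la P^{\sigma^{n-1}},P^{\sigma^{n-3}}\ra$ with $\Gamma$ are things you say you ``would'' verify; as written nothing pins $P$ down. The proposal therefore identifies the correct ingredients but does not constitute a proof of the quoted theorem.
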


In \cite[Section 5]{ZZ}, exploiting the Theorem above, it has been shown that $L_{\psi_6}$ is not equivalent to a linear set of LP-type.
The following proposition involves similar arguments. Clearly, by Proposition \ref{GAL_psi}, hereafter one may suppose $k<t$.

\begin{proposition}	\label{p:s2}
	The linear set $L_{\psi_8^{(k)}}$ is not  of LP-type for $k=1,3$.
\end{proposition}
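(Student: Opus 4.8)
The plan is to invoke the LP-type characterization of Theorem \ref{Theorem3.5}, which is applicable precisely because $n=8$, and to show that already its condition $(i)$ cannot hold. By the proof of Proposition \ref{p:s1} (with $t=4$, so that by Theorem \ref{t:main} and Proposition \ref{GAL_psi} the exponents $k\in\{1,3\}$ are exactly the scattered ones below $t$), the linear set $L_{\psi_8^{(k)}}$ is the projection of the subgeometry $\Sigma$ of points $P_u=\la(u,u^q,\dots,u^{q^{7}})\ra_{\Fqn}$ from the $(n-3)$-subspace $\Gamma_k$ of equations $x_0=0$ and $x_k+x_{4-k}-x_{4+k}+x_{8-k}=0$, where $\sigma$ is the generator of the cyclic, order-$8$ subgroup of $\PGaL(8,q^8)$ fixing $\Sigma$ pointwise given there. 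Since a linear set of LP-type must satisfy $(i)$ for every vertex representing it, it suffices to exhibit the single vertex $\Gamma_k$ and prove that $\intn_\tau(\Gamma_k)\neq2$ for every generator $\tau$ of that subgroup.

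First I would reduce the number of generators to be examined. The generators are the $\sigma^j$ with $\gcd(j,8)=1$, namely $j\in\{1,3,5,7\}$; applying $\sigma^m$ to the iterated intersection shows $\intn_{\tau^{-1}}(\Gamma_k)=\intn_\tau(\Gamma_k)$, so only $\tau=\sigma$ and $\tau=\sigma^3$ need to be treated. Next I would translate the action of $\sigma$ into coordinates: on a hyperplane $\sum_i a_ix_i=0$ it sends the coefficient vector $(a_0,\dots,a_7)$ to $(a_7^q,a_0^q,\dots,a_6^q)$, a cyclic shift composed with Frobenius. As all coefficients of the two equations defining $\Gamma_k$ lie in $\{0,\pm1\}$, the Frobenius acts trivially, and each $\Gamma_k^{\sigma^j}$ is again cut out by two linear equations obtained by shifting the index set by $j$.

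The core of the argument is then the computation of the projective dimensions of the iterated intersections $\Gamma_k\cap\Gamma_k^{\tau}\cap\dots\cap\Gamma_k^{\tau^m}$, each of which is the zero locus of an explicit linear system over $\F_{q^8}$, its dimension being $7$ minus the rank. Carrying this out for $m=0,1,2,3$ yields the dimension sequence $5,3,1,0$ in each of the four relevant cases $(k,\tau)$ with $k\in\{1,3\}$ and $\tau\in\{\sigma,\sigma^3\}$. In particular $\dim(\Gamma_k\cap\Gamma_k^\tau\cap\Gamma_k^{\tau^2})=1=n-3-2\cdot2$, so the strict inequality defining $\intn$ first fails to hold at $m=2$ and first holds at $m=3$; hence $\intn_\tau(\Gamma_k)=3$ for every generator $\tau$. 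Condition $(i)$ of Theorem \ref{Theorem3.5} is therefore violated, and $L_{\psi_8^{(k)}}$ is not of LP-type for $k=1,3$.

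The hard part is the bookkeeping in these rank computations: one must verify the dimension sequence $5,3,1,0$ for \emph{both} generators $\sigma,\sigma^3$ and \emph{both} exponents $k=1,3$, because condition $(i)$ fails only if no generator produces intersection number $2$. Checking that the coordinate equations $x_i=0$ and the cyclically shifted copies of $x_k+x_{4-k}-x_{4+k}+x_{8-k}=0$ always combine to the same ranks is entirely elementary, but it must be done with care, tracking the shift-plus-Frobenius action through each step.
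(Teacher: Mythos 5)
Your proposal is correct and follows essentially the same route as the paper: represent $L_{\psi_8^{(k)}}$ as a projection from the explicit vertex $\Gamma_k$, compute the dimensions of the iterated intersections $\Gamma_k\cap\Gamma_k^{\tau}\cap\cdots\cap\Gamma_k^{\tau^m}$ for each generator $\tau$ (reducing to $\sigma$ and $\sigma^3$ via the inverse symmetry, exactly as the paper does with $\sigma^7$ and $\sigma^5$), obtain $\dim=3$ and $\dim=1$ at the first two steps (the latter using $q$ odd), and conclude $\intn_\tau(\Gamma_k)\ge3$, violating condition $(i)$ of Theorem \ref{Theorem3.5}. The only differences are cosmetic: the paper settles for $\intn_\tau(\Gamma)\ge3$ rather than the exact value $3$, and like you it carries out the arithmetic explicitly only for $(k,\tau)=(1,\sigma)$, deferring the remaining cases to ``similar arguments.''
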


\proof 
The result will be showed only for $k=1$.  Similar considerations lead to the same result also for $k=3$.
Then, as before, the linear set $L_{\psi_8^{(1)}}$ can be represented as the projection of the subgeometry 
$\Sigma$ whose points
are of type $\la(u,u^q,\ldots,u^{q^{7}})\ra_{\F_{q^8}}$, $u\neq0$, from the vertex $\Gamma$
of equations $x_0=x_1+x_{3}-x_{5}+x_{7}=0$ onto the line $x_1=x_2=\ldots=x_{6}=0$.
Let $\sigma\in \PGaL(8,q^8)$ be defined as 
\[ 
  \la(x_0,x_1,\ldots,x_7)\ra_{\F_{q^8}}^{\sigma}=\la (x^q_1,x^q_2,\ldots,x^{q}_0) \ra_{\F_{q^8}}. 
\]
The collineations $\sigma$, $\sigma^3$, $\sigma^5$ and $\sigma^7$ 
are the only generators of the subgroup of $\PGaL(8,q^8)$ fixing pointwise the subgeometry $\Sigma$.
Consider the subspaces

\begin{equation}
\Gamma^{\sigma}:
\begin{cases}
x_1=0 \\
x_2+x_4-x_6+x_0=0
\end{cases}
\textnormal{and}\quad 
\Gamma^{\sigma^2}:
\begin{cases}
x_2=0\\
x_3+x_5-x_7+x_1=0.
\end{cases}	
\end{equation}
By direct computation, $\dim(\Gamma \cap \Gamma^{\sigma})=3$ and, since $q$ is odd, $\dim(\Gamma \cap \Gamma^{\sigma} \cap \Gamma^{\sigma^2})=1$.

Furthermore, since $\Gamma \cap \Gamma^{\sigma^7}=(\Gamma \cap \Gamma^{\sigma})^{\sigma^7}$ and 
$\Gamma \cap \Gamma^{\sigma^7} \cap \Gamma^{(\sigma^7)^2}=(\Gamma \cap \Gamma^\sigma \cap 
\Gamma^{\sigma^2})^{\sigma^6}$, we have $\dim(\Gamma \cap \Gamma^{\sigma^7})=3$ and $\dim(\Gamma \cap 
\Gamma^{\sigma^7} \cap \Gamma^{(\sigma^7)^2})=1$. 
Hence
\[\mathrm{intn}_{\sigma}(\Gamma)=\mathrm{intn}_{\sigma^7}(\Gamma)\geq 3. \]

A similar argument can be applied also for $\sigma^3$ and $\sigma^5$.
As a consequence, the necessary condition stated in Theorem \ref{Theorem3.5} for a linear set in $\PG(1,q^8)$
to be of LP-type is not satisfied by $L_{\psi_8^{(1)}}$.
\endproof

In \cite{CMPZ}, the scattered subspace $ U^{3,n}_{s,\delta}$ of $\F^2_{q^n}$ is  exhibited for $n \in\{6,8\}$, $s$ coprime to $n$ and under some conditions on $\delta$ and $q$.\\
Moreover, according to \cite[Section 5]{CMPZ}, $U^{3,n}_{s,\delta}$ is $\GL(2, q^n)$-equivalent to $U^{3,n}_{n-s,\delta^{q^{n-s}}}$ and to $U^{3,n}_{s+n/2,\delta^{-1}}$ , thus
it is enough to take into account the linear sets $L^{3,n}_{s,\delta}$ with $s < n/4$, $\gcd(s, n/2) = 1$ 
and hence only with $s = 1$ for $n = 6, 8$.
Finally, the author in \cite[Proposition 4.1 and 4.2]{CsMZ2018}  showed that the $\mathcal{Z}(\GaL)$-class of $L^{3,n}_{1,\delta}$ is
two and $L^{3,n}_{1,\delta}$ is a simple linear set.

\begin{proposition}\label{p:s3}
	The linear set $L_{\psi_8^{(k)}}$, $k=1,3$, is not $\PGaL$-equivalent to $L^{3,8}_{s,\delta}$ for any $s$.
\end{proposition}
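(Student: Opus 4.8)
The plan is to prove that $L_{\psi_8^{(k)}}$ and $L^{3,8}_{s,\delta}$ are inequivalent by exhibiting an invariant that separates them. The two natural candidates are the $\GaL$-class (equivalently, the $\mathcal Z(\GaL)$-class) and the structure of the linear set under adjunction, and I would try the class invariant first since the preceding remarks have already assembled the needed facts: by the cited results of \cite{CsMZ2018}, $L^{3,8}_{1,\delta}$ is simple (i.e.\ has $\GaL$-class $1$) while its $\mathcal Z(\GaL)$-class equals $2$. If I can show that $L_{\psi_8^{(k)}}$ has a different $\GaL$-class or $\mathcal Z(\GaL)$-class, inequivalence follows immediately, because both notions are $\PGaL$-invariants of the linear set. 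The first step would therefore be to compute, or at least bound, the $\mathcal Z(\GaL)$-class of $L_{\psi_8^{(k)}}$.

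The second, more robust approach—and the one I expect to actually carry the proof—is to use the intersection-number invariant $\intn_\sigma(\Gamma)$ already exploited in Proposition \ref{p:s2}. For a linear set of type $L^{3,8}_{s,\delta}$ one can represent it as a projection $\mathrm{p}_{\Gamma',\Lambda}(\Sigma)$ and compute $\intn_\sigma(\Gamma')$ for each generator $\sigma$ of the pointwise stabilizer of $\Sigma$; this quantity is a $\PGaL$-invariant of the pair, so it must agree for any two $\PGaL$-equivalent linear sets. In Proposition \ref{p:s2} it was shown that $\intn_\sigma(\Gamma)\ge 3$ for the vertex $\Gamma$ associated to $\psi_8^{(k)}$. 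So the plan is: first recall the explicit projection data $\Gamma'$ for $U^{3,8}_{1,\delta}$ using the coordinate description $\{(x,\delta x^{q^s}+x^{q^{s+4}})\}$, write down the defining equations of $\Gamma'$ as a vertex in $\PG(7,q^8)$ analogous to the $\Gamma$ of Proposition \ref{p:s2}; then compute $\dim(\Gamma'\cap\Gamma'^{\sigma})$ and $\dim(\Gamma'\cap\Gamma'^{\sigma}\cap\Gamma'^{\sigma^2})$ by direct linear algebra, and likewise for the other generators $\sigma^3,\sigma^5,\sigma^7$; and finally read off $\intn_\sigma(\Gamma')$. If this value is strictly smaller than $3$ for every generator (I would expect $\intn_\sigma(\Gamma')=2$, matching the LP-type flavour of the family), then no $\PGaL$ map can carry $L^{3,8}_{s,\delta}$ to $L_{\psi_8^{(k)}}$, since equivalence would force equal intersection numbers for corresponding generators.

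The reduction to $s=1$ is already handed to us: by the $\GL(2,q^8)$-equivalences noted after the list (namely $U^{3,n}_{s,\delta}\cong U^{3,n}_{n-s,\delta^{q^{n-s}}}\cong U^{3,n}_{s+n/2,\delta^{-1}}$), it suffices to rule out equivalence with $L^{3,8}_{1,\delta}$, and Proposition \ref{GAL_psi} lets us restrict to $k<t=4$, i.e.\ $k=1,3$. So the proof splits into the two cases $k=1$ and $k=3$, each handled by the same intersection-number computation, with the $k=3$ case likely reducible to $k=1$ by the same symmetry argument used in Proposition \ref{p:s2} (``similar considerations'').

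The main obstacle I anticipate is purely computational: setting up the vertex $\Gamma'$ for the $U^{3,8}$ family correctly from its bilinear/coordinate description and then evaluating the three nested intersection dimensions without error, keeping track of the parity-of-$q$ and $\gcd$ conditions that can collapse a dimension (as happened in Proposition \ref{p:s2}, where $q$ odd forced $\dim=1$ at the triple intersection). A secondary subtlety is to ensure the computation is carried out for all four generators $\sigma,\sigma^3,\sigma^5,\sigma^7$, not just one, since the intersection number is defined as a minimum over generators and a single small value for one generator is what actually yields the invariant; one must verify that the \emph{value of} $\intn$ genuinely differs from that of $L_{\psi_8^{(k)}}$ rather than merely one of the dimension counts. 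Should the intersection numbers happen to coincide, the fallback is the $\mathcal Z(\GaL)$-class computation, comparing against the value $2$ established for $L^{3,8}_{1,\delta}$.
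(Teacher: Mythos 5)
Your plan does not match the paper's proof, and as written it has a genuine gap. The paper's route is much more direct: since $L^{3,8}_{1,\delta}$ is a \emph{simple} linear set (the quoted result of \cite{CsMZ2018}), a $\PGaL$-equivalence of the linear sets forces a $\GaL(2,q^8)$-equivalence of the underlying subspaces, so the proposition reduces to showing that $U_{2\psi_8^{(k)}}$ is not $\GL(2,q^8)$-equivalent to $U^{3,8}_{1,\delta}$; this is then killed by writing out the identity $cx+d\cdot 2\psi_8^{(k)}(x)=\delta z^q+z^{q^5}$ with $z=ax+b\cdot 2\psi_8^{(k)}(x)$ and comparing coefficients, which forces $a=c=d=0$. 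Neither of your two routes reaches this. Your fallback (separating by $\GaL$- or $\mathcal Z(\GaL)$-class) provably cannot work: Propositions \ref{p:class2} and the quoted results of \cite{CsMZ2018} show that \emph{both} $L_{\psi_8^{(k)}}$ and $L^{3,8}_{1,\delta}$ are simple of $\mathcal Z(\GaL)$-class two, so these invariants coincide.

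Your main route, the intersection-number computation, rests on a wrong prediction. The value $\intn_\sigma(\Gamma')=2$ is the \emph{characterization of LP-type} sets (Theorem \ref{Theorem3.5}), and $L^{3,8}_{1,\delta}$ is not of LP-type; a direct computation with the vertex $\Gamma'\colon x_0=\delta x_1+x_5=0$ gives $\dim(\Gamma'\cap\Gamma'^{\sigma})=3$ and $\dim(\Gamma'\cap\Gamma'^{\sigma}\cap\Gamma'^{\sigma^2})=1$, exactly the same dimensions as for the vertex of $\psi_8^{(k)}$, so your intended conclusion ``$\intn_\sigma(\Gamma')<3$ for every generator'' is false and the argument does not close. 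To salvage it you would have to determine the \emph{exact} intersection numbers on both sides (the paper only establishes $\intn_\sigma(\Gamma)\ge3$ for $\psi_8^{(k)}$), do so for \emph{every} $(n-3)$-subspace realizing each linear set (this is where the $\mathcal Z(\GaL)$-class enters, since a $\PGaL$-equivalence need only carry some vertex of one set to some vertex of the other), and for all four generators. That is substantially more work than the paper's two-line reduction plus coefficient comparison, and none of it is carried out in your proposal.
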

\proof By the results in \cite{CMPZ,CsMZ2018} quoted above, 
the linear set $L_{\psi_8^{(1)}}$  is $\PGaL$-equivalent to 
some $L^{3,8}_{s,\delta}$ if and only if 
$$U_{2\psi}=\{(x,x^q+x^{q^3}-x^{q^5}+x^{q^7}): x \in \F_{q^8}\}$$ 
is $\GaL$-equivalent to $U^{3,8}_{1,\delta}$.
Then suppose that there exist an
invertible matrix $\begin{pmatrix}
a & b \\
c & d 
\end{pmatrix}$
such that for each $x \in \F_{q^8}$ there exists $z \in \F_{q^8}$
satisfying
\begin{equation}
\begin{pmatrix}
a & b \\
c & d 
\end{pmatrix}
\begin{pmatrix}
x\\
x^q+x^{q^3}-x^{q^5}+x^{q^7}
\end{pmatrix}=
\begin{pmatrix}
z\\
\delta z^q + z^{q^5}
\end{pmatrix}.
\end{equation}
Equivalently, for each $x \in \F_{q^8}$,
\begin{equation}\label{f}
\begin{gathered}
cx+d(x^q+x^{q^3}-x^{q^5}+x^{q^7})=\\
=\delta [ a^q x^q+b^q(x^{q^2}+x^{q^4}-x^{q^6}+x)]+ 
[ a^{q^5} x^{ q^5}+b^{q^5}(x^{q^6}+x-x^{q^2}+x^{q^4})].
\end{gathered}
\end{equation}

This is a polynomial identity in $x$ that implies
\begin{equation*}
\begin{cases}
c=\delta b^q + b^{q^5}\\
d=\delta a^q\\
0=\delta b^q - b^{q^5}\\
d=0\\
0=\delta b^q + b^{q^5}\\
d=-a^{q^5},
\end{cases}
\end{equation*}
whence, since $\delta \neq 0$, $a=c=d=0$: a contradiction.
By applying the argument above for $U_{2\psi_8^{(3)}}$,
taking into account \eqref{e:kfold},
\begin{gather*}
cx+d(x^q+x^{q^3}+x^{q^5}-x^{q^7})=\\
=\delta [ a^q x^q+b^q(x^{q^2}+x^{q^4}+x^{q^6}-x)]+ 
[ a^{q^5} x^{ q^5}+b^{q^5}(x^{q^6}+x+x^{q^2}-x^{q^4})]. 
\end{gather*}
As before, this polynomial identity in $x$ implies
\begin{equation*}
\begin{cases}
c=-\delta b^q + b^{q^5}\\
d=\delta a^q\\
0=\delta b^q + b^{q^5}\\
d=0\\
0=\delta b^q - b^{q^5}\\
d=a^{q^5},
\end{cases}
\end{equation*}
whence, since $\delta \neq 0$, $a=c=d=0$, a contradiction again.
\endproof

\section{ \texorpdfstring{$\mathcal{Z}(\GaL)$}{Z(GL)}- and \texorpdfstring{$\GaL$}{GL}-class of \texorpdfstring{$L_{\psi_n^{(k)}}$}{L}  for some values of $n$ and $k$}

Now, similarly to what has been done in \cite{CMP}, the $\mathcal{Z}(\GaL)$-class and the $\GaL$-class of the maximum scattered $\Fq$-linear set $L_{\psi_n^{(k)}}$ for small values of $n$ and $k$ will be determined. 
For sake of completeness, the following preliminary results will be recalled.

\begin{proposition}\label{Prop2.3}
	\cite[Proposition 2.3]{CsMZ2018} Let $f$ and $g$ be two $q$-polynomials over $\Fqn$. Then $L_f \subseteq L_g$ if and only if
	$$x^{q^n}- x \,| \, \det D_{F(Y)}(x) \in \Fqn [x],$$
	where $F(Y ) = f(x)Y -g(Y )x$ (cf.\ \eqref{dickson}). 
	In particular, if $\deg(\det D_{F(Y )}(x))< q^n$ , then
	$L_f \subseteq L_g$ if and only if $\det D_{F(Y )}
	(x)$ is the zero polynomial.
\end{proposition}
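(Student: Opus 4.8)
The plan is to reduce the geometric containment $L_f\subseteq L_g$ to a pointwise singularity condition on the linearized polynomial $F(Y)$, and then to re-encode that condition as the vanishing of a single polynomial in $x$.

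First I would unwind the meaning of containment. The points of $L_f$ are $\langle(x,f(x))\rangle_{\Fqn}$ for $x\in\Fqn^*$, and such a point lies on $L_g$ exactly when $(x,f(x))$ is $\Fqn$-proportional to some $(y,g(y))$ with $y\in\Fqn^*$. Proportionality of these two vectors is the vanishing of the $2\times2$ determinant $x\,g(y)-f(x)\,y$, equivalently the condition $F(y)=0$ for $F(Y)=f(x)Y-g(Y)x$. Hence, for fixed $x\in\Fqn^*$, the point $\langle(x,f(x))\rangle_{\Fqn}$ belongs to $L_g$ if and only if $F$ has a \emph{nonzero} root in $\Fqn$.

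Next I would use that, for fixed $x$, the expression $F(Y)$ is again a $q$-polynomial in $Y$ over $\Fqn$, hence an $\Fq$-linear endomorphism of $\Fqn$. Because $f(0)=g(0)=0$ one has $F(0)=0$, so $Y=0$ is always a root, and $F$ admits a nonzero root precisely when $\ker F\neq\{0\}$, i.e.\ when $F$ is singular. By the identity $\rk D_{F(Y)}=\rk F$ recalled just after \eqref{dickson}, this happens if and only if $\det D_{F(Y)}(x)=0$. Writing $P(x):=\det D_{F(Y)}(x)\in\Fqn[x]$, I would thereby have shown that for each $x\in\Fqn^*$ the point $\langle(x,f(x))\rangle_{\Fqn}$ lies on $L_g$ if and only if $P(x)=0$.

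It remains to handle $x=0$ and to pass from pointwise vanishing to divisibility. At $x=0$ one has $F(Y)=f(0)Y-g(Y)\cdot0=0$, so $D_{F(Y)}$ is the zero matrix and $P(0)=0$ automatically; thus $L_f\subseteq L_g$ holds if and only if $P$ vanishes at every element of $\Fqn$. Since a polynomial in $\Fqn[x]$ vanishes on all of $\Fqn$ exactly when it is divisible by $x^{q^n}-x$, this yields the stated criterion, and the ``in particular'' clause follows from the elementary fact that a polynomial of degree less than $q^n=|\Fqn|$ possessing $q^n$ distinct roots must be identically zero. The only genuine input beyond bookkeeping is the rank identity $\rk D_{F(Y)}=\rk F$; the subtlety to keep in mind is that the relevant root $y$ must be nonzero, so that the condition is truly the singularity of $F$ (detected by $\det D_{F(Y)}$) rather than a naive injectivity statement, together with the separate verification that the degenerate value $x=0$ causes no trouble.
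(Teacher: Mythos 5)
The paper does not prove this statement: it is quoted verbatim from \cite[Proposition~2.3]{CsMZ2018}, so there is no in-paper proof to compare against. Your argument is correct and is essentially the standard derivation of that result: you reduce membership of $\la(x,f(x))\ra_{\Fqn}$ in $L_g$ to the existence of a nonzero root of the $q$-polynomial $F(Y)=f(x)Y-g(Y)x$, detect this via $\rk D_{F(Y)}=\rk F$ as the vanishing of $\det D_{F(Y)}(x)$, check the harmless case $x=0$, and convert vanishing on all of $\Fqn$ into divisibility by $x^{q^n}-x$ (with the ``in particular'' clause following from the degree bound).
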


\begin{lemma}\label{Lemma3.6}
	\cite[Lemma 3.6]{CMP} Let $f(x) =\sum_{i=0}^{k}a_i x^{q^i}$ and $g(x)  =\sum_{i=0}^{k}b_i x^{q^i}$
	be two $q$-polynomials over $\Fqn$ such that $L_f = L_g$. Then
	\begin{equation}\label{(1)}
	a_0 = b_0,
	\end{equation}
	for $k = 1, 2, \ldots,n-1$ it holds that
	\begin{equation}\label{(2)}
	a_ka_{n-k}^{q^k}= b_kb_{{n-k}}^{q^k},
	\end{equation}
	for $k = 2, 3, \ldots , n - 1$ it holds that
	\begin{equation}\label{(3)}
	a_1a^q_{k-1}a_{n-k}^{q^k}+a_ka^q_{n-1}a^{q^k}_{n-k+1} =b_1b^q_{k-1}b_{n-k}^{q^k}+b_kb^q_{n-1}b^{q^k}_{n-k+1} .
	\end{equation}
\end{lemma}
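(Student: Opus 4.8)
The plan is to obtain the three families of relations from Proposition \ref{Prop2.3}, applied to both inclusions $L_f\subseteq L_g$ and $L_g\subseteq L_f$. First I would set $F(Y)=f(x)Y-g(Y)x$ and $G(Y)=g(x)Y-f(Y)x$; since $L_f=L_g$ is equivalent to the two inclusions, Proposition \ref{Prop2.3} gives that both $\det D_{F(Y)}(x)$ and $\det D_{G(Y)}(x)$ are divisible by $x^{q^n}-x$. The computational input is an explicit shape of the Dickson matrix of $F$, viewed as a $q$-polynomial in $Y$: its coefficient of $Y^{q^0}$ is $f(x)-b_0x$ and its coefficient of $Y^{q^i}$ is $-b_ix$ for $i\ge1$, so that
\[
D_{F(Y)}(x)=\mathrm{diag}\bigl(f(x),f(x)^q,\ldots,f(x)^{q^{n-1}}\bigr)-\mathrm{diag}\bigl(x,x^q,\ldots,x^{q^{n-1}}\bigr)\,D_g .
\]
Each entry is an $\F_{q^n}$-linear form in the conjugates $x,x^q,\ldots,x^{q^{n-1}}$, hence $\det D_{F(Y)}(x)$ is homogeneous of degree $n$ in these conjugates, with $x$-degree at most $nq^{n-1}$. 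The latter bound lets me invoke the ``in particular'' clause of Proposition \ref{Prop2.3}, so that divisibility by $x^{q^n}-x$ amounts to the vanishing of $\det D_{F(Y)}(x)$ as a polynomial once conjugate-monomials are collapsed to a common power of $x$.

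A convenient way to organise these coefficients is the diagonal expansion
\[
\det\bigl(\mathrm{diag}(\lambda_0,\ldots,\lambda_{n-1})-B\bigr)=\sum_{S\subseteq\{0,\ldots,n-1\}}(-1)^{\,n-|S|}\Bigl(\prod_{i\in S}\lambda_i\Bigr)\det B_{\bar S},
\]
applied with $\lambda_i=f(x)^{q^i}$ and $B=\mathrm{diag}(x^{q^i})\,D_g$, where $B_{\bar S}$ is the principal submatrix indexed by the complement of $S$. This writes $\det D_{F(Y)}(x)$ as a sum, over subsets $S$, of the complementary principal minors of $D_g$ weighted by the conjugates of $x$ and of $f(x)-b_0x$, and it lets me sort the resulting $x$-monomials by weight. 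The idea is then to read off the vanishing coefficients of lowest weight: the lowest one should give $a_0=b_0$, the next order the quadratic relations \eqref{(2)} $a_ka_{n-k}^{q^k}=b_kb_{n-k}^{q^k}$ (these are exactly the products of symmetric off-diagonal entries of the Dickson matrices), and the following order the cubic relations \eqref{(3)}.

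The hard part will be the bookkeeping that isolates each stated expression cleanly, because the raw coefficient of a fixed power of $x$ in $\det D_{F(Y)}(x)$ genuinely mixes the $a_i$ and the $b_i$. For instance, tracking which permutations contribute shows that the coefficient of the lowest power $x^{n}$ is $(a_0-b_0)\prod_{i=1}^{n-1}a_{n-i}^{q^i}$, which by itself yields $a_0=b_0$ only when no $a_i$ vanishes. To reach the symmetric identities \eqref{(1)}--\eqref{(3)} in full generality one must therefore combine the conditions coming from $F$ with the mirror conditions coming from $G$ and use the cancellation of the cross terms, so that only the intrinsic invariants of $D_f$ and $D_g$ survive; identifying, at each targeted weight, precisely which permutations (equivalently, which cycle types of the complementary derangements) contribute is the delicate step. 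A final technical point is the small-$q$ regime: when $q\le n$ the exponents $\sum_{i}e_iq^i$ attached to distinct conjugate-monomials can collide modulo $q^n-1$, so the passage from divisibility to coefficientwise vanishing—immediate for $q>n$—must be accompanied by the explicit collision analysis, keeping in mind that the exponents $\sum_{i\in S}q^i$ arising from $0/1$ digit patterns are already pairwise distinct and below $q^n$.
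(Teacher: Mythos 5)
First, a framing remark: the paper does not prove this lemma at all --- it is imported verbatim from \cite[Lemma 3.6]{CMP} --- so the only benchmark is the proof given there, and your route is not that one. Your plan (feed $F(Y)=f(x)Y-g(Y)x$ into Proposition \ref{Prop2.3} and read the identities off coefficients of $\det D_{F(Y)}(x)$) has gaps that are not mere bookkeeping. (i) The degree bound $nq^{n-1}$ is below $q^n$ only when $q>n$, so the ``in particular'' clause of Proposition \ref{Prop2.3} does not apply in general; you flag the collision problem but do not resolve it, and the monomials in the determinant are \emph{not} confined to $0/1$ digit patterns (a row fixed by the permutation contributes an arbitrary conjugate $x^{q^{i+l}}$ through its diagonal entry), so collisions genuinely occur for $q\le n$. (ii) Your derivation of \eqref{(1)} from the coefficient of $x^{n}$ gives $(a_0-b_0)\prod_{i=1}^{n-1}a_{n-i}^{q^i}=0$, and the mirror condition from $G$ gives $(b_0-a_0)\prod_{i=1}^{n-1}b_{n-i}^{q^i}=0$; if some $a_j$ and some $b_l$ vanish, both conditions are vacuous and $a_0=b_0$ does not follow, so ``combining with $G$'' does not close this case. (iii) For \eqref{(2)} and \eqref{(3)} you only assert that ``the next order'' coefficients produce exactly these symmetric expressions: no monomial is identified and no cancellation of the cross terms mixing the $a_i$ with the $b_i$ is carried out. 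Since these identities are the entire content of the lemma, what you have is a programme rather than a proof, and point (ii) shows the programme fails as stated in degenerate cases where the lemma is still true.

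The actual argument in \cite{CMP} is different and avoids all of this. From $L_f=L_g$ one gets that the maps $x\mapsto f(x)/x$ and $x\mapsto g(x)/x$ have the same value set on $\F_{q^n}^*$, and each value $m$ is attained $q^{w}-1\equiv-1\pmod p$ times, $w$ being the weight of the corresponding point; hence $\sum_{x\in\F_{q^n}^*}(f(x)/x)^{d}=\sum_{x\in\F_{q^n}^*}(g(x)/x)^{d}$ for every $d\ge1$. Evaluating these power sums via $\sum_{x\in\F_{q^n}^*}x^{e}\in\{0,-1\}$ for $d=1$, $d=q^k+1$ and $d=q^k+q+1$ yields \eqref{(1)}, then \eqref{(2)}, then \eqref{(3)} (the last after discarding the summands already controlled by \eqref{(1)} and \eqref{(2)}). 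This works uniformly in the coefficients, with no nonvanishing hypotheses --- precisely where the determinant approach breaks down.
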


Therefore, the following results can be shown.

\begin{proposition}\label{p:class1}
    Let $q\equiv1\pmod 4$.
	The $\mathcal{Z}(\GaL)$-class of $L_{\psi_{6}}$ is two. 
	Moreover, $L_{\psi_{6}}$ is  a  simple linear set.
\end{proposition}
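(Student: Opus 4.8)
The plan is to list, up to the scalar relation $U\sim\lambda U$ ($\lambda\in\F_{q^6}^*$), all $\Fq$-subspaces of $\F_{q^6}^2$ defining $L_{\psi_6}$, and then to check that they fall into a single $\GaL(2,q^6)$-orbit. Because the projectivity $(X,Y)\mapsto(X,2Y)$ lies in $\GL(2,q^6)$ and preserves both the scalar relation and $\GaL$-equivalence, I may work with $2\psi_6(x)=x^q+x^{q^2}-x^{q^4}+x^{q^5}$ and compute the two class invariants for $L_{2\psi_6}$. As $L_{2\psi_6}$ is maximum scattered and misses the point $\la(0,1)\ra_{\F_{q^6}}$, each of its defining $\Fq$-subspaces is the graph $U_g=\{(x,g(x))\colon x\in\F_{q^6}\}$ of a $q$-polynomial $g=\sum_{i=0}^5 b_ix^{q^i}$, so describing all such $g$ is the whole task.

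First I would obtain $\mathcal{Z}(\GaL)$-class $\ge2$. By \eqref{e:kfold}, $2\psi_6^{(5)}(x)=x^q-x^{q^2}+x^{q^4}+x^{q^5}$ is the adjoint of $2\psi_6$, so by the Remark after Theorem \ref{t:main} it defines the same linear set $L_{2\psi_6}$. Yet $U_{2\psi_6}$ and $U_{2\psi_6^{(5)}}$ are not scalar multiples: if $2\psi_6^{(5)}(\lambda x)=\lambda\,2\psi_6(x)$ held identically, the $x^q$-coefficient would force $\lambda\in\Fq$ and then the $x^{q^2}$-coefficient would give $-\lambda=\lambda$, whence $\lambda=0$. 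Thus these two graphs represent distinct scalar-classes and the $\mathcal{Z}(\GaL)$-class is at least two.

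The core is the opposite bound. For $g=\sum b_ix^{q^i}$ with $L_g=L_{2\psi_6}$ I would feed $f=2\psi_6$ (so $a_0=a_3=0$, $a_1=a_2=a_5=1$, $a_4=-1$) into Lemma \ref{Lemma3.6}. Identity \eqref{(1)} gives $b_0=0$; \eqref{(2)} gives $b_3=0$, $b_1b_5^q=1$ and $b_2b_4^{q^2}=-1$; and the only nontrivial instances $k=2,5$ of \eqref{(3)} read $b_1^{1+q}b_4^{q^2}+b_2b_5^{q+q^2}=0$ together with a companion cubic relation. Substituting $b_4^{q^2}=-b_2^{-1}$ and $b_5^{q+q^2}=b_1^{-(1+q)}$ collapses the first to $b_2^2=b_1^{2(1+q)}$, i.e.\ $b_2=\epsilon\,b_1^{1+q}$ with $\epsilon\in\{1,-1\}$; then $b_4=-b_2^{-q^4}$, $b_5=b_1^{-q^5}$ are forced and the companion relation becomes automatic. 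Hence $g$ depends only on the pair $(b_1,\epsilon)$, and a direct check identifies the scalar multiples of $U_{2\psi_6}$ with the pairs $(\epsilon=1,\ \N_{q^6/q}(b_1)=1)$ and those of $U_{2\psi_6^{(5)}}$ with $(\epsilon=-1,\ \N_{q^6/q}(b_1)=1)$. It remains to prove that $L_g=L_{2\psi_6}$ forces $\N_{q^6/q}(b_1)=1$; here I would use Proposition \ref{Prop2.3}: since both sets are maximum scattered of rank $6$, $L_g=L_{2\psi_6}$ is equivalent to $L_g\subseteq L_{2\psi_6}$, i.e.\ to $x^{q^6}-x\mid\det D_{F(Y)}(x)$ with $F(Y)=g(x)Y-2\psi_6(Y)x$. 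Imposing this divisibility on the reduced family should leave exactly $\N_{q^6/q}(b_1)=1$. This Dickson-determinant computation is the main obstacle: the entries contain $g(x)^{q^i}$ of degree as high as $q^{10}>q^6$, so one cannot simply set the determinant to zero but must argue through divisibility by $x^{q^6}-x$, tracking the Frobenius conjugates of the $b_i$ throughout. Granting it, every defining $g$ is a scalar multiple of $U_{2\psi_6}$ or of $U_{2\psi_6^{(5)}}$, so the $\mathcal{Z}(\GaL)$-class is exactly two.

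Simplicity then needs no extra computation. For $q\equiv1\pmod4$ both $\psi_6^{(1)}$ and $\psi_6^{(5)}$ are scattered by Theorem \ref{t:main}, and Proposition \ref{GAL_psi} with $n=6$, $k=1$, $m=5=n-1$ shows that $U_{\psi_6}$ and $U_{\psi_6^{(5)}}$—hence $U_{2\psi_6}$ and $U_{2\psi_6^{(5)}}$—lie in a single $\GaL(2,q^6)$-orbit. Since every subspace defining $L_{2\psi_6}$ is a scalar multiple of one of these two, and scalar matrices belong to $\GL(2,q^6)$, all defining subspaces form one $\GaL(2,q^6)$-orbit. Therefore the $\GaL$-class of $L_{2\psi_6}$, and so of $L_{\psi_6}$, equals one, i.e.\ $L_{\psi_6}$ is simple.
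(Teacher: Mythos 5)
Your overall strategy coincides with the paper's: use Lemma \ref{Lemma3.6} to pin down the shape of any $q$-polynomial $g$ with $L_g=L_{2\psi_6}$, use Proposition \ref{Prop2.3} to extract the remaining constraint on the coefficients, and then invoke Hilbert 90 and Proposition \ref{GAL_psi} to conclude simplicity. One step is genuinely cleaner in your version: you obtain the dichotomy $b_2=\pm b_1^{1+q}$ directly from the $k=2$ instance of identity \eqref{(3)} (and correctly observe that the $k=5$ instance is then automatic), whereas the paper extracts the same dichotomy $a_2=\pm a_1^{q+1}$ from the vanishing of a coefficient of the Dickson determinant. Your explicit verification that $U_{2\psi_6}$ and $U_{2\psi_6^{(5)}}$ are not $\F_{q^6}^*$-proportional (needed for the class to be at least two) is also a detail the paper leaves implicit.

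There is, however, a genuine gap at the decisive step: you never establish $\N_{q^6/q}(b_1)=1$, and without it the upper bound on the $\mathcal{Z}(\GaL)$-class --- hence the whole proposition --- is not proved. You correctly identify Proposition \ref{Prop2.3} as the tool, but ``imposing this divisibility should leave exactly $\N_{q^6/q}(b_1)=1$'' is a hope, not an argument; indeed Lemma \ref{Lemma3.6} alone cannot yield the norm condition, so the determinant computation cannot be avoided. The paper closes this step explicitly: it writes $D_{F(Y)}(x)=\N_{q^6/q}(a_1a_2)^{-1}Q_{a_1,a_2}(x)$, bounds $\deg Q_{a_1,a_2}\le 4q^5+2q^4<q^6$ (which also disposes of the degree issue you flag as the ``main obstacle'': after reducing the matrix entries modulo $x^{q^6}-x$, divisibility of a polynomial of degree less than $q^6$ by $x^{q^6}-x$ forces it to be the zero polynomial, exactly as in the second clause of Proposition \ref{Prop2.3}), and then exhibits the coefficient of $x^{q+q^2+q^4+3q^5}$, which after substituting $a_2=\pm a_1^{q+1}$ equals, up to sign, $a_1^{1+q+q^2+2q^3+2q^4}\bigl(\N_{q^6/q}(a_1)-1\bigr)^2$. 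To complete your proof you must carry out this coefficient extraction; everything after that point in your write-up is sound.
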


\begin{proof}
	Since $\psi_{6}(x)$ and $\psi_6^{(5)}(x)$ define the same linear  set, we know that $L_{2\psi_6}=L_{2\psi^{(5)}_{6}}$. Suppose $L_\varphi=L_{2\psi_6}$ for some $\varphi(x)=\sum^5_{i=0}a_{i}x^{q^i} \in \F_{q^6}[x]$. We show that there exists $\lambda \in \F^*_{q^6}$ such that either $\lambda U_\varphi=U_{2\psi_{6}}$ or  $\lambda U_{\varphi}=U_{2\psi^{(5)}_{6}}$.\\
	By \eqref{(1)}, \eqref{(2)} and \eqref{(3)} in Lemma \ref{Lemma3.6}, one obtains that 
	\begin{equation}\label{condition1}
	a_0=a_3=0,\quad a_1a^q_5=1 \quad \text{and} \quad  a_2a_4^{q^2}=-1.
	\end{equation}
	By Proposition \ref{Prop2.3}, the Dickson matrix associated with the $q$-polynomial
	\begin{equation}
	F(Y)=\varphi(x)Y-2\psi_6(Y)x
	\end{equation}
	has determinant $D_{F(Y)}(x)$ equal to zero for each $x \in \F_{q^6}$, where, by \eqref{condition1}, $$\varphi(x)=a_1x^q+a_2x^{q^3}-a^{-q^4}_2x^{q^4}+a^{-q^{5}}_1x^{q^5}.$$
	Direct computation shows that 
	\begin{equation}\label{DF(Y)}
	D_{F(Y)}(x)=\frac{1}{\N_{q^6/q}(a_1a_2)}Q_{a_1,a_2}(x),
	\end{equation}
	where $Q_{a_1,a_2}(x)$ is a polynomial in $\F_{q^6}[x]$ whose coefficients are polynomials in $a_1$ and $a_2$.\\
	By a straightforward estimate, we note  that the $\deg(Q_{a_1,a_2}(x))$ is at most $4q^5+2q^4$. 
Since $q \geq 5$, $\deg(Q_{a_1,a_2}(x))$ is less than $q^6$. Therefore $Q_{a_1,a_2}(x)$ is the null polynomial. Consider the coefficient  $$a_1^{1 + q + q^2} a_2^{1 + q + 2 q^2 + q^4} (a_1^{q^3 + q^4} - a_2^{q^3})(a_1^{q^3 + q^4} + a_2^{q^3})$$
of the term $x^{q^3 + 2 q^4 + 3 q^5}$ of $Q_{a_1,a_2}(x)$ , it is zero if and only if either $a_2=a^{q+1}_1$ or $a_2=-a^{q+1}_1$.
In both cases, since up to the sign the coefficient of the term $x^{
	q + q^2 + q^4 + 3 q^5}$ is $a_1^{1 + q + q^2 + 2 q^3 + 
2 q^4}(\N_{q^6/q}(a_1)-1)^2$ and it vanishes, we get $\N_{q^6/q}(a_1)=1$. 
Therefore, putting $a_1=\lambda^{q-1}$, we obtain $\lambda U_{\varphi}=U_{2\psi_6}$ if $a_2=a^{q+1}_1$ and $\lambda U_{\varphi}=U_{2\psi^{(5)}_{6}}$ if $a_2=-a^{q+1}_1$. 
Hence the $\mathcal{Z}(\GaL)$-class of $L_{\psi_{6}}$ is two and, by Proposition $\ref{GAL_psi}$,  it is simple.
\end{proof}

\begin{proposition}\label{p:class2}
The $\mathcal{Z}(\GaL)$-class of $L_{\psi^{(k)}_{8}}$, $k=1,3$, is two. 
Moreover, $L_{\psi^{(k)}_{8}}$ is  a  simple linear set.
\end{proposition}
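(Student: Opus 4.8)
The plan is to follow the template of Proposition \ref{p:class1}, with $\psi_6$ replaced by $\psi_8^{(k)}$. I treat $k=1$ in detail and indicate that $k=3$ is entirely analogous after replacing $2\psi_8^{(1)}(x)=x^q+x^{q^3}-x^{q^5}+x^{q^7}$ by $2\psi_8^{(3)}(x)=x^q+x^{q^3}+x^{q^5}-x^{q^7}$ (cf.\ \eqref{e:kfold}). First I record the two subspaces that will turn out to be the only ones, up to scalars, defining the linear set. Since $2\psi_8^{(7)}(x)=x^q-x^{q^3}+x^{q^5}+x^{q^7}$ is the adjoint of $2\psi_8^{(1)}(x)$ with respect to the bilinear form $\tr_{q^8/q}(xy)$, the two polynomials determine the same linear set (as in the Remark for $n=6$), so $L_{2\psi_8^{(1)}}=L_{2\psi_8^{(7)}}$; by Proposition \ref{GAL_psi} (with $m=7=n-k$) the subspaces $U_{2\psi_8^{(1)}}$ and $U_{2\psi_8^{(7)}}$ lie in one $\GaL(2,q^8)$-orbit, yet they are not $\F_{q^8}^*$-proportional (a scalar matching the $x^q$-coefficients would have to lie in $\F_q^*$, and then it could not match the opposite signs on the $x^{q^3}$-coefficients, since $p\neq2$).

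Next I parametrise the competitors. Suppose $L_\varphi=L_{2\psi_8^{(1)}}$ with $\varphi(x)=\sum_{i=0}^{7}a_ix^{q^i}$. Feeding $\varphi$ and $2\psi_8^{(1)}$ into Lemma \ref{Lemma3.6}, identities \eqref{(1)} and \eqref{(2)} give $a_0=a_4=0$, $a_1a_7^q=1$ and $a_3a_5^{q^3}=-1$ (so $a_1,a_3,a_5,a_7\neq0$), while \eqref{(3)} for $k=2$, combined with the case $k=2$ of \eqref{(2)}, forces $a_2=a_6=0$. Hence $\varphi(x)=a_1x^q+a_3x^{q^3}+a_5x^{q^5}+a_7x^{q^7}$, with $a_5$ and $a_7$ expressed through $a_1,a_3$ by the two norm-type relations, so that $a_1,a_3$ are the only free parameters.

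I then invoke Proposition \ref{Prop2.3} with $F(Y)=\varphi(x)Y-2\psi_8^{(1)}(Y)x$: because $L_\varphi=L_{2\psi_8^{(1)}}$, the Dickson determinant $\det D_{F(Y)}(x)$ vanishes for every $x\in\F_{q^8}$. I write $\det D_{F(Y)}(x)=\N_{q^8/q}(a_1a_3)^{-1}\,Q_{a_1,a_3}(x)$, bound $\deg Q_{a_1,a_3}(x)$ below $q^8$, and conclude that $Q_{a_1,a_3}(x)$ is the null polynomial. Reading off the analogues of the two coefficients exploited in the proof of Proposition \ref{p:class1} should then give first the relation $a_3=\pm a_1^{1+q+q^2}$ and next $\N_{q^8/q}(a_1)=1$.

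Finally I convert these into the class statement. Writing $a_1=\lambda^{q-1}$ by Hilbert~90, the scaled subspace $\lambda U_\varphi$ is associated with $y\mapsto\sum_i a_i\lambda^{1-q^i}y^{q^i}$; a short computation using $a_1a_7^q=1$, $a_3a_5^{q^3}=-1$ and $a_3=\pm a_1^{1+q+q^2}$ shows that this polynomial equals $2\psi_8^{(1)}(y)$ when $a_3=a_1^{1+q+q^2}$ and equals $2\psi_8^{(7)}(y)$ when $a_3=-a_1^{1+q+q^2}$. Thus every subspace defining $L_{\psi_8^{(1)}}$ is, up to an $\F_{q^8}^*$-scalar, one of the two non-proportional subspaces $U_{2\psi_8^{(1)}}$, $U_{2\psi_8^{(7)}}$, so the $\mathcal{Z}(\GaL)$-class equals two; and since these two subspaces are $\GaL(2,q^8)$-equivalent, the $\GaL$-class equals one, i.e.\ $L_{\psi_8^{(1)}}$ is simple. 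The main obstacle is the third paragraph: the explicit evaluation of the $8\times8$ Dickson determinant and the isolation of the two decisive coefficients of $Q_{a_1,a_3}$ is a sizeable symbolic computation, best carried out by computer algebra, and the degree bound $\deg Q_{a_1,a_3}<q^8$ must be checked with care for the smallest admissible values of $q$.
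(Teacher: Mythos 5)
Your proposal follows the paper's proof almost step for step: the same reduction of the coefficients via Lemma \ref{Lemma3.6} to $a_0=a_2=a_4=a_6=0$, $a_1a_7^q=1$, $a_3a_5^{q^3}=-1$; the same use of Proposition \ref{Prop2.3} with $F(Y)=\varphi(x)Y-2\psi_8^{(1)}(Y)x$; the same extraction of $a_3=\pm a_1^{q^2+q+1}$ and $\N_{q^8/q}(a_1)=1$ from two coefficients of $Q_{a_1,a_3}$; and the same Hilbert~90 step $a_1=\lambda^{q-1}$ leading to $\lambda U_\varphi\in\{U_{2\psi_8^{(1)}},U_{2\psi_8^{(7)}}\}$, with simplicity then following from Proposition \ref{GAL_psi}.

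There is, however, one concrete gap. Since $n=8$ means $t=4$ is even, every odd $q$ is admissible, including $q=3$. The degree of $Q_{a_1,a_3}(x)$ is bounded by $4(q^6+q^7)$, and for $q=3$ this equals $11664>3^8=6561$, so the vanishing of $D_{F(Y)}(x)$ at every $x\in\F_{q^8}$ does \emph{not} imply that $Q_{a_1,a_3}$ is the null polynomial. Your closing remark that the bound ``must be checked with care for the smallest admissible values of $q$'' flags the problem but does not fix it: the bound genuinely fails at $q=3$ and no amount of care in estimating the degree will rescue it. The paper treats $q=3$ as a separate case by first reducing $Q_{a_1,a_3}(x)$ modulo $x^{q^8}-x$ and then reading off explicit coefficients of the \emph{reduced} polynomial (those of $x^{48}$ and $x^{2439}$) to obtain $a_3=\pm a_1^{q^2+q+1}$ and $\N_{q^8/q}(a_1)=1$. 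You would need to add this separate computation (or an equivalent argument) to cover $q=3$; for $q\ge5$ your argument is complete and matches the paper's.
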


\begin{proof} First we prove the statement for $k=1$.
Since $\psi_{8}(x)$ and $\psi_8^{(7)}(x)$ define the same linear  set, we know $L_{2\psi_{8}}=L_{2\psi^{(7)}_{8}}$. Suppose $L_\varphi=L_{2\psi_8(x)}$ for some $\varphi(x)=\sum^7_{i=0}a_{i}x^{q^i} \in \F_{q^8}[x]$. We show that there exists $\lambda \in \F^*_{q^8}$ such that either $\lambda U_{\varphi}=U_{2\psi_{8}}$ or  $\lambda U_{\varphi}=U_{2\psi^{(7)}_{8}}$.\\
By \eqref{(1)}, \eqref{(2)} and \eqref{(3)} in Lemma \ref{Lemma3.6}, one obtains that 
\begin{equation}\label{condition2}
a_0=a_2=a_4=a_6=0,\quad a_1a^q_7=1 \quad \text{and} \quad  a_3a_5^{q^3}=-1.
\end{equation}
By Proposition \ref{Prop2.3}, the Dickson matrix associated with the $q$-polynomial
	\begin{equation}
	F(Y)=\varphi(x)Y-2\psi_8(Y)x
	\end{equation}
	has zero determinant $D_{F(Y)}(x)$ for each $x \in \F_{q^8}$, where, by \eqref{condition2}, $$\varphi(x)=a_1x^q+a_3x^{q^3}-a^{-q^5}_3x^{q^5}+a^{-q^{7}}_1x^{q^7}.$$
	Direct computation shows that 
	\begin{equation}
D_{F(Y)}(x)=\frac{1}{\N_{q^8/q}(a_1a_3)}Q_{a_1,a_3}(x),
	\end{equation}
	where $Q_{a_1,a_3}(x)$ is a polynomial in $\F_{q^8}[x]$ whose coefficients are polynomials in $a_1$ and $a_3$.
	By a straightforward estimate, we note  that the $\deg(Q_{a_1,a_3}(x))$ is at most $4(q^6+q^7)$. 
	\begin{itemize}
		\item [-] Case 1, $q \geq 5$. In this case  $4(q^6+q^7)$ is less than $q^8$. Therefore $D_{F(Y)}(x)$ is the null polynomial. Consider then the coefficient  $x^{q^4 + q^5 + 3 q^6 + 3 q^7}$ of $Q_{a_1,a_2}(x)$, it is zero if and only if either $a_3=a^{q^2+q+1}_1$ or $a_3=-a^{q^2+q+1}_1$.\\
	In both cases, then, since up to the sign the coefficient $$a_1^{2 + q^3 + 2 q^4 + 2 q^5 + 3 q^6 + 4 q^7}(\N_{q^8/q}(a_1)-1)^2$$
	 of term $x^{3 + 3 q + q^2 + q^5}$ is zero,  $\N_{q^8/q}(a_1)=1$ follows.\\
Therefore, putting $a_1=\lambda^{q-1}$, we obtain $\lambda U_{\varphi}=U_{2\psi_8}$ if $a_3=a^{q^2+q+1}_1$ and $\lambda U_{\varphi}=U_{2\psi^{(7)}_{8}}$ if $a_3=-a^{q^2+q+1}_1$. \\

	\item [-] Case $q=3$. 	
 Reduce the polynomial $Q_{a_1,a_3}(x)$ in \eqref{DF(Y)} modulo $(x^{q^8}-x)$, then  one gets that the coefficient of $x^{48}$ is $a_1^{5480} a_3^{4248} - a_1^{5454} a_3^{4250}$. Then, either $a_3=a_1^{q^2+q+1}$ or $a_3=-a_1^{q^2+q+1}$. In both cases, since $Q_{a_1,a_2}(x)$ ($\mod x^{q^8}-x$) has to be the null polynomial, up to sign the coefficient of term $x^{2439}$ is $a^{2860}(\N_{q^8}(a_1)-1)^2$, whence $\N_{q^8/q}(a_1)=1$.
Therefore, putting $a_1=\lambda^{q-1}$, we obtain $\lambda U_{\varphi}=U_{2\psi_8}$ if $a_3=a^{q^2+q+1}_1$ and $\lambda U_{\varphi}=U_{2\psi^{(7)}_{8}}$ if $a_3=-a^{q^2+q+1}_1$.
\end{itemize}

The computations for $k=3$ are similar and we omit to report them.
Hence the $\mathcal{Z}(\GaL)$-class of $L_{\psi^{(k)}_{8}}$ is two for $k=1,3$ and, 
by Proposition $\ref{GAL_psi}$, such linear set is simple.
\end{proof}

\begin{corollary}
	The linear sets $L_{\psi_{8}}$ and  $L_{\psi^{(3)}_{8}}$ are not $\PGaL$-equivalent.
\end{corollary}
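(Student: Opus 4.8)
The plan is to deduce the non-equivalence of the two linear sets from the non-equivalence of their defining subspaces, exploiting the fact established in Proposition \ref{p:class2} that both $L_{\psi_8}$ and $L_{\psi_8^{(3)}}$ are simple. The point is that, for a simple linear set, \emph{every} $\Fq$-subspace defining it lies in a single $\GaL(2,q^8)$-orbit; this is precisely the content of the $\GaL$-class being equal to one. Thus the $\PGaL$-equivalence of two simple maximum scattered linear sets can be upgraded to a $\GaL$-equivalence of their underlying subspaces, at which point Proposition \ref{GAL_psi} applies directly.

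Concretely, I would argue by contradiction. Suppose there is a collineation $\varphi\in\PGaL(2,q^8)$ with $L_{\psi_8}^{\varphi}=L_{\psi_8^{(3)}}$, and choose a lift $g\in\GaL(2,q^8)$ of $\varphi$. Then the image $U_{\psi_8^{(1)}}^{g}$ is an $\Fq$-subspace of $\F_{q^8}^2$ whose associated linear set is $L_{\psi_8}^{\varphi}=L_{\psi_8^{(3)}}$. Since $L_{\psi_8^{(3)}}$ is simple by Proposition \ref{p:class2}, the subspace $U_{\psi_8^{(1)}}^{g}$ must lie in the same $\GaL(2,q^8)$-orbit as $U_{\psi_8^{(3)}}$. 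As $U_{\psi_8^{(1)}}^{g}$ is by construction $\GaL(2,q^8)$-equivalent to $U_{\psi_8^{(1)}}$, it follows that $U_{\psi_8^{(1)}}$ and $U_{\psi_8^{(3)}}$ are $\GaL(2,q^8)$-equivalent.

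This, however, contradicts Proposition \ref{GAL_psi}: for $n=8$, $k=1$ and $m=3$, neither $k=m$ nor $k=n-m=5$ holds, so $U_{\psi_8^{(1)}}$ and $U_{\psi_8^{(3)}}$ are \emph{not} $\GaL(2,q^8)$-equivalent. The contradiction shows that no such $\varphi$ exists, and hence $L_{\psi_8}$ and $L_{\psi_8^{(3)}}$ are not $\PGaL$-equivalent.

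There is essentially no computational obstacle here, since the two genuinely hard inputs—the classification of $\GaL$-orbits of the subspaces in Proposition \ref{GAL_psi} and the simplicity of the linear sets in Proposition \ref{p:class2}—have already been established. The only delicate point to get right is the logical reduction itself: one must check that $\PGaL$-equivalence of the linear sets, together with simplicity of the target, really forces $\GaL$-equivalence of the defining subspaces, rather than merely $\PGaL$-equivalence. This is exactly why it is the simplicity assertion (that is, $\GaL$-class one), and not merely the $\mathcal{Z}(\GaL)$-class computation, that is the feature of Proposition \ref{p:class2} which must be invoked.
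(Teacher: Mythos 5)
Your proof is correct and is exactly the argument the paper intends (the corollary is stated without proof, as an immediate consequence of Propositions \ref{GAL_psi} and \ref{p:class2}): simplicity of $L_{\psi_8^{(3)}}$ upgrades a hypothetical $\PGaL$-equivalence of the linear sets to a $\GaL(2,q^8)$-equivalence of $U_{\psi_8^{(1)}}$ and $U_{\psi_8^{(3)}}$, which Proposition \ref{GAL_psi} rules out since $1\neq 3$ and $1\neq 8-3$. You also correctly identify that it is the $\GaL$-class-one assertion, not the $\mathcal{Z}(\GaL)$-class computation, that drives the reduction.
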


\begin{proposition}
    Let $q\equiv1\pmod 4$.
	The $\mathcal{Z}(\GaL)$-class of $L_{\psi_{10}}$ is two. 
	Moreover, $L_{\psi_{10}}$ is a  simple linear set.
\end{proposition}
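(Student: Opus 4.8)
The plan is to follow verbatim the strategy of Propositions~\ref{p:class1} and~\ref{p:class2}, now with $n=10$, $t=5$, so that $2\psi_{10}(x)=x^q+x^{q^4}-x^{q^6}+x^{q^9}$. First I would record that, exactly as for $n=6,8$, the map $\psi_{10}^{(9)}(x)$ is the adjoint of $\psi_{10}(x)$ with respect to the bilinear form $\tr_{q^{10}/q}(xy)$: its coefficients arise from those of $2\psi_{10}(x)$ by the rule $c_i\mapsto c_i^{q^{n-i}}$, which here merely permutes the four monomials and flips the sign of the two middle ones, giving $2\psi_{10}^{(9)}(x)=x^q-x^{q^4}+x^{q^6}+x^{q^9}$. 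Hence $L_{2\psi_{10}}=L_{2\psi_{10}^{(9)}}$. Assume then $L_\varphi=L_{2\psi_{10}}$ for some $q$-polynomial $\varphi(x)=\sum_{i=0}^{9}a_ix^{q^i}\in\F_{q^{10}}[x]$; the goal is to exhibit $\lambda\in\F_{q^{10}}^*$ with $\lambda U_\varphi=U_{2\psi_{10}}$ or $\lambda U_\varphi=U_{2\psi_{10}^{(9)}}$.

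The first step is to constrain the coefficients of $\varphi$ through Lemma~\ref{Lemma3.6}. Equation~\eqref{(1)} gives $a_0=0$, and~\eqref{(2)} yields
\[
a_1a_9^q=1,\quad a_4a_6^{q^4}=-1,\quad a_5^{1+q^5}=0,\quad a_2a_8^{q^2}=a_3a_7^{q^3}=0,
\]
so in particular $a_5=0$. The genuine work at this stage is to feed the relevant instances of~\eqref{(3)} (for $k=2,3,8,9$) into these identities in order to discard the spurious pairs, forcing $a_2=a_3=a_7=a_8=0$. This leaves $\varphi(x)=a_1x^q+a_4x^{q^4}-a_4^{-q^6}x^{q^6}+a_1^{-q^9}x^{q^9}$, i.e.\ $\varphi$ has exactly the support of $2\psi_{10}$, depending on the two free parameters $a_1,a_4\in\F_{q^{10}}^*$.

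Next I would invoke Proposition~\ref{Prop2.3} applied to $F(Y)=\varphi(x)Y-2\psi_{10}(Y)x$: since $L_\varphi=L_{2\psi_{10}}$, the Dickson determinant $D_{F(Y)}(x)$ vanishes for every $x$, and a direct computation writes it as $\tfrac1{\N_{q^{10}/q}(a_1a_4)}\,Q_{a_1,a_4}(x)$ for a polynomial $Q_{a_1,a_4}(x)\in\F_{q^{10}}[x]$ whose coefficients are monomials in $a_1,a_4$. A crude degree bound gives $\deg Q_{a_1,a_4}\le 4q^9+4q^8$, which is smaller than $q^{10}$ as soon as $q\ge5$; since $t=5$ is odd, Theorem~\ref{t:main}$(ii)$ forces $q\equiv1\pmod4$ and hence $q\ge5$, so no separate treatment of small $q$ is needed (unlike the $q=3$ case in Proposition~\ref{p:class2}). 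Thus $Q_{a_1,a_4}(x)$ is the zero polynomial. Reading off one well-chosen coefficient should factor, up to a nonzero monomial in $a_1$, as $\bigl(a_4-a_1^{q^3+q^2+q+1}\bigr)\bigl(a_4+a_1^{q^3+q^2+q+1}\bigr)$, giving $a_4=\pm a_1^{q^3+q^2+q+1}$; a second coefficient should be, up to sign and a nonzero power of $a_1$, a multiple of $\bigl(\N_{q^{10}/q}(a_1)-1\bigr)^2$, yielding $\N_{q^{10}/q}(a_1)=1$. Writing $a_1=\lambda^{q-1}$ (possible precisely because the norm is $1$), one checks $a_4=\lambda^{q^4-1}$ in the $+$ case and $a_4=-\lambda^{q^4-1}$ in the $-$ case, i.e.\ $\lambda U_\varphi=U_{2\psi_{10}}$ or $\lambda U_\varphi=U_{2\psi_{10}^{(9)}}$, respectively.

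It follows that every $U$ with $L_U=L_{2\psi_{10}}$ is $\F_{q^{10}}^*$-equivalent to $U_{2\psi_{10}}$ or $U_{2\psi_{10}^{(9)}}$; these two representatives are inequivalent under $\F_{q^{10}}^*$-scaling, since $\mu U_{2\psi_{10}}=U_{2\psi_{10}^{(9)}}$ would force $\mu^{q-1}=1$ from the $x^q$-coefficient, whence the $x^{q^4}$-coefficient could not match the sign change. Hence the $\mathcal{Z}(\GaL)$-class of $L_{\psi_{10}}$ is exactly two. For the last assertion, Proposition~\ref{GAL_psi} with $k=1$ and $m=9=n-1$ shows that $U_{\psi_{10}}$ and $U_{\psi_{10}^{(9)}}$ lie in a single $\GaL(2,q^{10})$-orbit, so the two $\mathcal{Z}(\GaL)$-representatives collapse to one $\GaL$-class, i.e.\ $L_{\psi_{10}}$ is simple. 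The main obstacle is the third paragraph: isolating, inside the enormous polynomial $Q_{a_1,a_4}(x)$ (which I would handle by computer algebra), two coefficients that cleanly produce the factorisation $a_4=\pm a_1^{q^3+q^2+q+1}$ and the norm condition $\N_{q^{10}/q}(a_1)=1$, together with the bookkeeping of the second paragraph needed to eliminate the spurious coefficients $a_2,a_3,a_7,a_8$ via~\eqref{(3)}.
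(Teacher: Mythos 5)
Your proposal follows the paper's proof essentially verbatim: Lemma~\ref{Lemma3.6} to reduce $\varphi$ to the support of $2\psi_{10}$ with $a_9=a_1^{-q^9}$ and $a_6=-a_4^{-q^6}$, Proposition~\ref{Prop2.3} together with a degree bound showing the Dickson determinant is the zero polynomial, extraction of two coefficients yielding $a_4=\pm a_1^{1+q+q^2+q^3}$ and $\N_{q^{10}/q}(a_1)=1$, and Proposition~\ref{GAL_psi} to pass from $\mathcal{Z}(\GaL)$-class two to simplicity. The only difference is that the paper exhibits the two witnessing coefficients explicitly (those of $x^{3+3q+2q^2+q^3+q^4}$ and, after substitution, of $x^{q^3+3q^7+3q^8+3q^9}$), whereas you leave them to be located by computer algebra.
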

\begin{proof}
Like in the previous propositions, \eqref{(1)}, \eqref{(2)} and \eqref{(3)} in \ref{Lemma3.6} imply that if 
$L_\varphi=L_{2\psi_{10}(x)}$ for some $\varphi(x)=\sum^9_{i=0}a_{i}x^{q^i} \in \F_{q^{10}}[x]$,
then 
\begin{equation}\label{e:n10k1}
a_0=a_2=a_3=a_5=a_7=a_8=0,
\end{equation}
and $a_9=a_1^{-q^9}$, $a_6=-a_4^{-q^6}$.
The determinant $D_{F(Y)}(x)$ of the Dickson matrix associated with 
$\varphi(x)Y-(Y^q+Y^{q^4}-Y^{q^6}+Y^{q^9})x$
has degree at most $4q^9+4q^8+2q^7< q^{10}$, so it vanishes.
The coefficient of $x^{3+3q+2q^2+q^3+q^4}$ in $D_{F(Y)}(x)$ is
\[
a_1^{w_1}a_4^{w_2}\left(a_1^{2(1+q+q^2+q^3)}-a_4^2\right)
\]
for some $w_1,w_2\in\mathbb Z$, implying $a_4=\pm a_1^{1+q+q^2+q^3}$.
In both cases, by substituting such expressions of $a_4$ in $D_{F(Y)}(x)$, the coefficient of
$x^{q^3 + 3q^7 + 3q^8 + 3q^9}$ is
\[
\pm a_1^w\left(1-\N_{q^{10}/q}(a_1)\right)^2
\]
for some $w\in\mathbb Z$, whence $\N_{q^{10}/q}(a_1)=1$.
The proof can now be completed as in the Propositions  \ref{p:class1} and \ref{p:class2}.
\end{proof}

\begin{corollary}
The linear set $L_{\psi_{10}}$ is a new maximum scattered $\Fq$-linear set in $\PG(1,q^{10})$
($q\equiv1\pmod4$).
\end{corollary}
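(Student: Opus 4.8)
The plan is to prove that $L_{\psi_{10}}$ is not $\PGaL$-equivalent to any of the maximum scattered linear sets of $\PG(1,q^{10})$ listed in Section \ref{newLS}. First I would observe that the families $U^{3,n}_{s,\delta}$, $U^{4,6}_{\delta}$ and $U^{5,6}_{h}$ only exist for $n\in\{6,8\}$, so for $n=10$ the only competitors are the pseudoregulus type $U^{1,10}_{s}$ and the LP-type $U^{2,10}_{s,\delta}$. The pseudoregulus case is immediate: since $\gcd(10,1)=1$ and $10\ge6$, Proposition \ref{p:s1} already shows that $L_{\psi_{10}}$ is not of pseudoregulus type. It therefore remains to exclude the LP-type.

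The delicate point is that the intersection-number characterization of LP-type (Theorem \ref{Theorem3.5}) is only available for $n\le6$ and $n=8$, so the method of Proposition \ref{p:s2} cannot be reused for $n=10$; this is the main structural obstacle. Instead I would descend to the level of subspaces. It is convenient to work with $L_{2\psi_{10}}$, which is projectively equivalent to $L_{\psi_{10}}$ and hence, by the preceding proposition, is simple. Simplicity, i.e.\ having $\GaL$-class one, is a $\PGaL(2,q^{10})$-invariant: a collineation lifts to an element $\hat\varphi\in\GaL(2,q^{10})$ that maps the subspaces representing a linear set to those representing its image and preserves their $\GaL$-orbit structure. Consequently, if $L_{2\psi_{10}}$ were $\PGaL$-equivalent to some $L^{2,10}_{s,\delta}$, then the latter would be simple as well, so all its representing subspaces would lie on the single $\GaL$-orbit of $U^{2,10}_{s,\delta}$; since the equivalence carries $U_{2\psi_{10}}$ to one such representing subspace, we would get that $U_{2\psi_{10}}$ is $\GaL(2,q^{10})$-equivalent to $U^{2,10}_{s,\delta}$. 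Finally, because the coefficients of $2\psi_{10}(x)$ lie in $\Fq$, the map $(a,b)\mapsto(a^p,b^p)$ stabilizes $U_{2\psi_{10}}$, so $\GaL$-equivalence to $U_{2\psi_{10}}$ reduces to $\GL$-equivalence, exactly as at the start of Section \ref{newLS}.

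It then suffices to rule out a $\GL(2,q^{10})$-equivalence between $U_{2\psi_{10}}$ and $U^{2,10}_{s,\delta}$ for every $s$ with $\gcd(s,10)=1$ and every admissible $\delta$. Following Proposition \ref{p:s3}, I would assume an invertible matrix $\begin{pmatrix}a&b\\ c&d\end{pmatrix}$ satisfies, for all $x\in\F_{q^{10}}$,
\[
cx+d\cdot 2\psi_{10}(x)=\delta z^{q^s}+z^{q^{10-s}},\qquad z=ax+b\cdot 2\psi_{10}(x),
\]
expand $2\psi_{10}(x)=x^{q}+x^{q^{4}}-x^{q^{6}}+x^{q^{9}}$ by \eqref{e:kfold}, reduce modulo $x^{q^{10}}-x$, and equate the coefficients of the powers $x^{q^{i}}$. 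The resulting linear conditions on $a,b,c,d$ are expected to force $a=c=d=0$, contradicting invertibility, just as in the $n=8$ computation. Because $U^{2,10}_{s,\delta}$ is $\GL$-equivalent to $U^{2,10}_{10-s,\delta^{q^{10-s}}}$, it is enough to treat $s\in\{1,3\}$. The only real labour, and hence the main obstacle once the reduction above is in place, is this coefficient bookkeeping for the two residues of $s$; everything else is structural and invokes results already established.
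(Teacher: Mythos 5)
Your proposal is correct and takes essentially the same route as the paper: the paper's proof of this corollary is precisely the observation that, $L_{\psi_{10}}$ being simple by the preceding proposition (and pseudoregulus type being excluded by Proposition \ref{p:s1}, the families $U^{3,n}_{s,\delta}$, $U^{4,6}_\delta$, $U^{5,6}_h$ not existing for $n=10$), it suffices to show that $U_{\psi_{10}}$ is not in the $\GaL(2,q^{10})$-orbit of any $U^{2,10}_{s,\delta}$, which it delegates to the general Proposition \ref{p:s4}. The only difference is in the last step: Proposition \ref{p:s4} handles all $n$ and $k$ at once by restricting the functional identity to $x\in\F_{q^t}$, where $\psi_n^{(k)}$ acts as the monomial $x^{q^{t-k}}$, which is lighter than (but equivalent in outcome to) the full coefficient comparison over $\F_{q^{10}}$ that you propose.
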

\begin{proof}
Since $L_{\psi_{10}}$ is a simple linear set, it is enough to check that $U_{\psi_{10}}$
does not belong to the $\GaL$-orbit of some $U_{s,\delta}^{2,10}$.
This will proved in Proposition \ref{p:s4} in a more general result.
\end{proof}

\begin{remark}
For $n=10$ and $k=3$, the equations in  Lemma \ref{Lemma3.6} does not imply
that six coefficients of $\varphi(x)$ are equal to zero,  like in \eqref{e:n10k1}.
This adds complexity to the computations.
\end{remark}

It is not known to the authors of this paper whether $L_{\psi_n^{(k)}}$ is a new linear set for $n>10$ and $\gcd(n,k)=1$
or $n=10$ and $k=3$. Indeed, it would be necessary to show that $L_{\psi^{(k)}_{n}}$ is not a  linear set of LP-type. Furthermore, the techniques used so far do not seem to be within easy reach when solving the issue of $\mathcal{Z}(\GaL)$- and $\GaL$-class of $L_{\psi_n^{(k)}}$ for $n>10$ and $\gcd(n,k)=1$ or $n=10$ and $k=3$ .

The  following result describes the intersection of $L_{\psi_n^{(k)}}$ with a special Baer subline.

\begin{proposition}
    Assume that $\psi_n^{(k)}$ is a scattered $q$-polynomial, $1\le k<t$.
	Let $\Sigma\cong\PG(1,q^t)$ be the subline of $\PG(1,q^n)$ consisting of the points represented by nonzero
	pairs in $\F_{q^t}^2$. Then $\Sigma\cap L_{\psi_n^{(k)}}$ is partitioned into two  
	$\Fq$-linear sets of pseudoregulus type
	of $\PG(1,q^t)$.
\end{proposition}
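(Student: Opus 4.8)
The plan is to write down $\Sigma\cap L_{\psi_n^{(k)}}$ explicitly and then recognise it as a disjoint union of two pseudoregulus-type linear sets. Since $\psi_n^{(k)}(0)=0$, every point of $L_{\psi_n^{(k)}}$ has a representative $(x,\psi_n^{(k)}(x))$ with $x\ne0$, and such a point lies on $\Sigma$ precisely when $\psi_n^{(k)}(x)/x\in\Fqt$. First I would decompose $x=x_1+x_2$ with $x_1\in\Fqt$ and $x_2\in W$ as in Proposition \ref{simple_lemma}~(iii); then \eqref{e:kfold}, together with $x_1^{q^t}=x_1$ and $x_2^{q^t}=-x_2$, yields the clean splitting $\psi_n^{(k)}(x)=x_1^{q^{t-k}}+x_2^{q^k}$, the first summand lying in $\Fqt$ and the second in $W$. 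Imposing $(\psi_n^{(k)}(x)/x)^{q^t}=\psi_n^{(k)}(x)/x$ and clearing the nonzero denominators $x=x_1+x_2$ and $x^{q^t}=x_1-x_2$ then collapses, after cancellation and division by $2$, to the single relation $x_1^{q^{t-k}}x_2=x_1x_2^{q^k}$.

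Next I would analyse this relation by cases. If $x_2=0$ the point is $\la(x_1,x_1^{q^{t-k}})\ra$, $x_1\in\Fqt^*$, and since $\gcd(t-k,t)=\gcd(k,t)=1$ this is the pseudoregulus-type linear set $L_A=L^{1,t}_{t-k}$ of $\Sigma\cong\PG(1,q^t)$. If $x_1=0$ the point is $\la(x_2,x_2^{q^k})\ra$ with $x_2\in W\setminus\{0\}$; using that $W$ is a $1$-dimensional $\Fqt$-space, say $W=w_0\Fqt$, and that $w_0^{q^t}=-w_0$ forces $\delta:=w_0^{q^k-1}\in\Fqt^*$, dividing the representative by $w_0$ recasts the point as $\la(s,\delta s^{q^k})\ra_{\Fqt}$, $s\in\Fqt^*$; since the subspace $U_{\delta x^{q^k}}$ is $\GL(2,q^t)$-equivalent to $U^{1,t}_k$ (again $\gcd(k,t)=1$), this is a pseudoregulus-type linear set $L_B$. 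In the remaining case $x_1\ne0\ne x_2$ the relation gives $x_1^{q^{t-k}-1}=x_2^{q^k-1}=:c\in\Fqt^*$, whence $\psi_n^{(k)}(x)=c(x_1+x_2)=cx$ and the point $\la(1,c)\ra$ already lies in $L_A$; thus these points contribute nothing new, and $\Sigma\cap L_{\psi_n^{(k)}}=L_A\cup L_B$.

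Finally I would show that this union is disjoint, which I expect to be the crux of the argument and the place where the scatteredness hypothesis is essential. Normalising first coordinates, $L_A=\{\la(1,d)\ra:d\in H\}$ and $L_B=\{\la(1,d)\ra:d\in\delta H\}$, where $H=\{u^{q^{t-k}-1}:u\in\Fqt^*\}=\{u^{q^k-1}:u\in\Fqt^*\}$ is the unique subgroup of $\Fqt^*$ of index $q-1$, i.e.\ the kernel of $\N_{q^t/q}$. Hence $L_A$ and $L_B$ are either equal or disjoint, and they are disjoint exactly when $\delta\notin H$, that is when $\N_{q^t/q}(\delta)\ne1$. The decisive computation is $\N_{q^t/q}(\delta)=\delta^{(q^t-1)/(q-1)}=w_0^{(q^t-1)(1+q+\cdots+q^{k-1})}=(w_0^{q^t-1})^{1+q+\cdots+q^{k-1}}=(-1)^{1+q+\cdots+q^{k-1}}$. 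As $q$ is odd the exponent is congruent to $k$ modulo $2$, and since $\psi_n^{(k)}$ is scattered Theorem \ref{t:main} forces $k$ to be odd (from $\gcd(k,t)=1$ when $t$ is even, and from $\gcd(k,2t)=1$ when $t$ is odd). Therefore $\N_{q^t/q}(\delta)=-1\ne1$, so $L_A\cap L_B=\emptyset$ and $\Sigma\cap L_{\psi_n^{(k)}}$ is partitioned into the two pseudoregulus-type linear sets $L_A$ and $L_B$.
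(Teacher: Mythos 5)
Your proof is correct and follows essentially the same route as the paper's: decompose $x=x_1+x_2$ with $x_1\in\Fqt$ and $x_2\in W$, reduce membership of $\la(x,\psi_n^{(k)}(x))\ra$ in $\Sigma$ to the relation $x_1^{q^{t-k}}x_2=x_1x_2^{q^k}$, and recognize the two resulting families as pseudoregulus-type linear sets sitting on distinct cosets of the norm-one subgroup. The only divergence is the mixed case $x_1\neq0\neq x_2$, where the paper derives a contradiction (the relation forces $x_2\in\Fqt$, impossible), while you instead show the corresponding point is already in $L_A$ — both suffice — and your explicit computation $\N_{q^t/q}(\delta)=(-1)^k=-1$ (using that scatteredness forces $k$ odd) makes the disjointness, which the paper treats only implicitly via the projectivity $\la(a,b)\ra\mapsto\la(a,\xi b)\ra$, fully transparent.
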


\begin{proof}
	The points of an $\Fq$-linear set of pseudoregulus type contained in $L_{\psi_n^{(k)}}$ are of type 
	$\la(h,h^{q^{t-k}})\ra_{\Fqn}$ for 
	$h\in\F_{q^t}^*$.
    Let $\xi\in\Fqt$ be such that $\N_{q^t/q}(\xi)=-1$.
    The map $\la (a,b) \ra_{\Fqn} \mapsto 	\la (a,\xi b) \ra_{\Fqn}$ induces a projectivity
    of $\Sigma$ mapping any of the $(q^t-1)/(q-1)$ points of type $\la(r,r^{q^k})\ra_{\Fqn}$, 
    $r\in W\setminus\{0\}$
    in a point having non-homogenous coordinate, say $\eta_r$, satisfying $\N_{q^t/q}(\eta_r)=1$.
    Therefore $M=\{\la(r,r^{q^k})\ra_{\Fqn}\colon r\in W\setminus\{0\}\}$ is a further linear set of 
    pseudoregulus type contained in $\Sigma$.

Next, let $P=\la (u,v)\ra_{\Fqn}$ be a point in $\Sigma$, with $u,v \in \Fqt$. 
	Then, $P$ belongs to $L_{\psi_n^{(k)}}$ if and only if there exists $\lambda, x \in \Fqn^*$ such that $x=\lambda u$ and $\psi^{(k)}(x)=\lambda v$. This is equivalent to
	$$\frac{\psi^{(k)}(x)}{x}=\frac{v}{u},$$
	whence $\psi^{(k)}(x)/x \in \Fqt$.
	Equivalently,
\[ 
\left( \frac{\psi^{(k)}(x)}{x} \right)^{q^t}= \frac{\psi^{(k)}(x)}{x}, \]
that can be reformulated in 
		\begin{equation}\label{alpha*beta}
	\alpha^{(k)}(x)\beta(x)^{q^{2t-1}}=\beta^{(k)}(x)\alpha(x)^q.
	\end{equation}
	Clearly the equation is satisfied by all $x$ either in $\Fqt$ or in $ W$.
	Now suppose that $x \in \Fqn \setminus (\Fqt \cup W)$. 
Then there exist $x_1 \in \Fqt$ and $x_2 \in W$, both nonzero, such that $x=x_1+x_2$.
	Next, \eqref{alpha*beta} implies
	$$\alpha^{(k)}(x_1)\beta(x_2)^{q^{2t-1}}=\beta^{(k)}(x_2)\alpha(x_1)^q.$$
	This is equivalent to
	$x_2^{q^k-1}=x_1^{q^{k(t-1)}-1}$; therefore, there exists $\mu \in \Fq$ such that $x_2=\mu x_1^{q^{k(t-2)}+\ldots+1}$, a contradiction.
\end{proof}

\section{New MRD-codes}\label{s:newmrd}


As recalled before, in  \cite[Section 5]{Sh} Sheekey explicated a link between  maximum scattered $\Fq$-linear sets of 
$\PG(1, q^n)$ and $\Fq$-linear MRD-codes with minimum distance $d=n-1$.
We briefly describe such relationship.
After fixing an $\Fq$-basis for $\Fqn$, we can define an isomorphism between the rings $\End_{\Fq}(\Fqn)$ and $\Fq^{n \times n}$ and then any RD-code can be seen as a subset of linearized polynomials over $\Fqn$.
Next, let $U_f = \{(x, f(x)): x \in \Fqn \}$ be an $\Fq$-subspace of $\Fqn \times \Fqn$, where $f(x)$  is  a $q$-polynomial over $\Fqn$.
The set
\begin{equation}\label{e:cf}
\cC_f = \{af(x) + bx: a, b \in \Fqn  \}=\la x, f(x) \ra_{q^n }
\end{equation}
corresponds to a subset of square matrices of order $n$ over $\Fq$ and hence to a rank distance code.
In particular, the following result holds:

\begin{theorem}\cite{Sh}
Let $f(x)$  be a linearized polynomial with $\deg_q(f) \leq n-1$. Then $\cC_f$ is an $\Fq$-linear MRD-code 
with parameters $(n, n, q; n - 1)$ if and only if $U_f$ is a maximum scattered $\Fq$-subspace
of $\Fqn \times \Fqn$, i.e., $f$ is a scattered $q$-polynomial.
\end{theorem}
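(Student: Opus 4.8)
The plan is to exploit the fact that $\cC_f$ is $\Fq$-linear, so that its minimum distance equals the minimum rank among its nonzero codewords, and to read off both the size and the minimum rank of $\cC_f$ directly from the behaviour of the maps $x\mapsto f(x)+mx$. First I would fix the size. Viewing $\End_{\Fq}(\Fqn)$ as containing $\Fqn$ through the scalar maps $x\mapsto cx$, the set $\cC_f=\la x,f(x)\ra_{q^n}$ is precisely the left $\Fqn$-span of the identity map and $f$. Hence $|\cC_f|=q^{2n}$ exactly when $f$ and the identity are $\Fqn$-independent, i.e.\ when $f(x)$ is not a scalar multiple $cx$; and since for $n\ge2$ a scalar map $cx$ gives the constant quotient $f(x)/x=c$, no scattered polynomial can be of this form. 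Thus for scattered $f$ we get $|\cC_f|=q^{2n}$, which is exactly the value of the Singleton-like bound $q^{n(n-d+1)}$ for $d=n-1$.

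Next I would analyse the ranks. A nonzero codeword has the form $af(x)+bx$ with $(a,b)\neq(0,0)$. If $a=0$ it is the invertible map $x\mapsto bx$, of rank $n$. If $a\neq0$, then composing with the invertible scalar $a^{-1}$ does not change the rank, so $\rk(af(x)+bx)=\rk(f(x)+mx)$ with $m=b/a$. Now $x\mapsto f(x)+mx$ is $\Fq$-linear, so its number of roots equals $q^{\dim_{\Fq}\ker}$; saying it has at most $q$ roots is the same as $\dim_{\Fq}\ker\le1$, i.e.\ $\rk(f(x)+mx)\ge n-1$. Therefore the defining condition of a scattered polynomial is literally the statement that every codeword with $a\neq0$ has rank at least $n-1$.

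Combining these two observations yields both implications. If $f$ is scattered, then $|\cC_f|=q^{2n}$ and every nonzero codeword has rank $\ge n-1$, so $d(\cC_f)\ge n-1$; the Singleton-like bound forbids distance $n$ for a code of size $q^{2n}>q^n$, so $d(\cC_f)=n-1$ and $\cC_f$ is MRD with parameters $(n,n,q;n-1)$. Conversely, if $\cC_f$ is such an MRD-code, then it meets the bound, forcing $|\cC_f|=q^{2n}$ (so $f$ is not a scalar and $x,f(x)$ are $\Fqn$-independent) and $d(\cC_f)=n-1$; the latter says that every nonzero codeword, in particular every $f(x)+mx$, has rank $\ge n-1$, which is the scattered condition. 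The step I expect to require the most care is the size bookkeeping combined with the use of the Singleton-like bound to pin the minimum distance to exactly $n-1$ rather than $n$: one must verify that $f$ scattered genuinely forces $\dim_{\Fq}\cC_f=2n$ and that this precludes distance $n$, whereas the rank computation itself becomes routine once the invertible scalar is factored out.
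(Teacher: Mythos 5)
Your argument is correct and complete. Note that the paper itself gives no proof of this statement---it is quoted from Sheekey \cite{Sh} as a known result---so there is nothing internal to compare against; your proof is essentially the standard one from \cite{Sh}: the identification of the scattered condition with ``every kernel of $f(x)+mx$ has $\Fq$-dimension at most one, i.e.\ rank at least $n-1$,'' together with the size count $|\cC_f|=q^{2n}$ and the Singleton-like bound to rule out minimum distance $n$. The only points requiring care---that $f$ cannot be a scalar map (so the code really has size $q^{2n}$) and that $d(\cC_f)=n$ is excluded by the bound---are both handled correctly in your write-up.
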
 
Moreover, in \cite{CMPZ}, the authors prove that $\cC_f$ is $\Fqn$-linear on the left, i.e.\ 
$L(\mathcal C_f ) \simeq \Fqn$, and any MRD-code with parameters $(n, n, q; n - 1)$ with left idealiser isomorphic to $\Fqn$  is equivalent to $\cC_f$, for some scattered $q$-polynomial $f(x)$, in \cite[Proposition 6.1]{CMPZ}. 
Finally, we recall the following result concerning the equivalence.
\begin{theorem}\cite{Sh}\label{t:citato}
	If $\cC_f$ and $\cC_g$ are two MRD-codes arising from
	maximum scattered subspaces $U_f$ and $U_g$ of $\Fqn \times \Fqn$ , then $\cC_f$ and $\cC_g$ are
	equivalent if and only if $U_f$ and $U_g$ are $\GaL(2, q^n)$-equivalent.
\end{theorem}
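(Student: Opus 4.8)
The plan is to pass through the left-$\Fqn$-module structure of the codes together with the fact, recalled above from \cite{CMPZ}, that $L(\cC_f)\cong\Fqn$. Writing $\cC_f=\langle\,\mathrm{id},f\,\rangle_{\Fqn}$, where the scalars act by $\lambda\cdot C\colon x\mapsto\lambda C(x)$, the first thing I would record is a dictionary between the two sides. Replacing the $\Fqn$-basis $(\mathrm{id},f)$ by $(\alpha\,\mathrm{id}+\beta f,\ \gamma\,\mathrm{id}+\delta f)$ with $\begin{pmatrix}\alpha&\beta\\\gamma&\delta\end{pmatrix}\in\GL(2,q^n)$ transforms the associated subspace $\{(x,f(x))\}$ by exactly the same element of $\GL(2,q^n)$. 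Hence the $\GL(2,q^n)$-orbit of $U_f$ is an invariant of the module $\cC_f$, and two subspaces (each yielding a valid module) lie in a common $\GL(2,q^n)$-orbit if and only if they generate the same left $\Fqn$-module. Since $f$ is scattered, $\mathrm{id}$ and $f$ are $\Fqn$-independent, so this module is genuinely two-dimensional and the dictionary is faithful.

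For the sufficiency I would decompose a semilinear map $\varphi$ with $\varphi(U_f)=U_g$ as $\varphi=A\circ\tau$, where $A\in\GL(2,q^n)$ and $\tau\colon(u,v)\mapsto(u^\rho,v^\rho)$ for some $\rho\in\Aut(\Fqn)$. The coordinatewise Frobenius $\tau$ sends $U_f$ to $U_{f^{[\rho]}}$, where $f^{[\rho]}$ is $f$ with $\rho$ applied to its coefficients; a direct computation gives $\cC_{f^{[\rho]}}=\{S\,C\,S^{-1}\colon C\in\cC_f\}$ for the $\rho$-semilinear map $S\colon x\mapsto x^\rho$, which is a code equivalence carrying the field automorphism $\sigma=\rho|_{\Fq}$. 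Finally, $A\in\GL(2,q^n)$ puts $U_g$ and $U_{f^{[\rho]}}$ in one $\GL(2,q^n)$-orbit, so by the dictionary $\cC_g=\cC_{f^{[\rho]}}$; composing the two steps yields $\cC_g\sim\cC_f$.

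For the necessity I would start from $\cC_g=\{P\,C^\sigma\,Q\colon C\in\cC_f\}$ with $P,Q\in\GL(n,q)$ and $\sigma\in\Aut(\Fq)$, and track the left idealiser. Here $Q$ cancels and one obtains $L(\cC_g)=P\,L(\cC_f^\sigma)\,P^{-1}$; since applying $\sigma$ entrywise permutes the field of scalar maps $x\mapsto\lambda x$, one has $L(\cC_f^\sigma)\cong\Fqn$, and as $L(\cC_g)\cong\Fqn$ as well, conjugation by $P$ must normalize this scalar field. Because $P$ is $\Fq$-linear it centralizes the scalars coming from $\Fq$, so the induced automorphism lies in $\Gal(\Fqn/\Fq)$, say $x\mapsto x^{q^i}$; realizing $P$ as a semilinear transformation of the one-dimensional $\Fqn$-space $\Fqn$ then forces $P\colon x\mapsto a\,x^{q^i}$ with $a\in\Fqn^*$. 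Substituting this back, $\cC_g$ is generated as a left $\Fqn$-module by $x\mapsto a\,Q(x)^{q^i}$ and $x\mapsto a\,f^{[\sigma]}(Q(x))^{q^i}$, so by the dictionary the associated subspace lies in the $\GL(2,q^n)$-orbit of $U_g$; after the change of variable $w=Q(x)$ it equals $\{(a\,w^{q^i},\,a\,f^{[\sigma]}(w)^{q^i})\colon w\in\Fqn\}$, i.e. the image of $U_{f^{[\sigma]}}$ under the element $(u,v)\mapsto(a\,u^{q^i},a\,v^{q^i})$ of $\GaL(2,q^n)$. Since $U_{f^{[\sigma]}}$ is in turn $\GaL$-equivalent to $U_f$, the subspaces $U_f$ and $U_g$ are $\GaL(2,q^n)$-equivalent.

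The step I expect to be most delicate is the idealiser computation in the necessity: one must check carefully that the entrywise automorphism $\sigma$ interacts with the scalar field so that $L(\cC_f^\sigma)$ is again $\Fqn$ and that the transformation rule $L(\cC_g)=P\,L(\cC_f^\sigma)\,P^{-1}$ holds, and then extract from ``$P$ normalizes the scalars'' the monomial form $x\mapsto a\,x^{q^i}$. Everything else reduces to the module–orbit dictionary of the first paragraph and the explicit description of the coefficient twists $f^{[\rho]}$, $f^{[\sigma]}$.
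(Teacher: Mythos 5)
First, a point of reference: the paper does not actually prove this statement---it is quoted from Sheekey \cite{Sh}---so there is no internal proof to compare against. Judged on its own merits, your overall strategy (the correspondence between $\Fqn$-bases of the module $\cC_f$ and a $\GL(2,q^n)$-orbit of subspaces, plus the left-idealiser argument forcing $P$ to act as a monomial semilinear map $x\mapsto ax^{q^i}$) is the standard route, and your necessity argument is essentially sound, modulo the bookkeeping that the \emph{entrywise} automorphism $\sigma$ and the field automorphism of $\Fqn$ extending it differ by conjugation by an element of $\GL(n,q)$, which can be absorbed into $P$ and $Q$.

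There is, however, a genuine error: your ``dictionary'' is false in one direction, and you invoke precisely that direction in the sufficiency proof. It is true that two $\Fqn$-bases of the same module yield subspaces in a common $\GL(2,q^n)$-orbit. The converse fails: if $A=\begin{pmatrix}\alpha&\beta\\ \gamma&\delta\end{pmatrix}$ maps $U_f$ onto $U_g$, then $h=\alpha\,\mathrm{id}+\beta f$ is an invertible $\Fq$-linear map, $g=(\gamma\,\mathrm{id}+\delta f)\circ h^{-1}$, and
\[
\cC_g=\{a\,g+b\,\mathrm{id}\colon a,b\in\Fqn\}=\cC_f\circ h^{-1},
\]
which is in general a different set of maps from $\cC_f$. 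Concretely, $U_{x^q}$ and $U_{x^{q^{n-1}}}$ lie in one $\GL(2,q^n)$-orbit via $\begin{pmatrix}0&1\\ 1&0\end{pmatrix}$, yet $\{ax^q+bx\}\neq\{ax^{q^{n-1}}+bx\}$ for $n\ge3$. Hence your conclusion ``$\cC_g=\cC_{f^{[\rho]}}$'' at the end of the sufficiency argument is wrong as stated. The theorem is not endangered, since $\cC_g=\cC_{f^{[\rho]}}\circ h^{-1}$ is still an equivalence in the paper's sense (take $P=I$, $\sigma=\mathrm{id}$, $Q=h^{-1}$), but you must restate the dictionary as: a common $\GL(2,q^n)$-orbit corresponds to equality of the modules \emph{up to right composition with an element of $\GL(n,q)$}, and then carry that extra right factor through both halves of the argument.
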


Therefore, if $\cC_f$ and $\cC_g$ are equivalent, one gets that the associated linear sets $L_f$
and $L_g$ are $\PGaL(2, q^n)$-equivalent. The converse statement does not hold in general, see \cite[Section 4.1]{OlgaFer}. By the results in Section \ref{newLS}, we have that 
\begin{itemize}
	\item [-] $U_{\psi_n^{(k)}}$ and $U^{1,n}_s$,
	\item [-] $U_{\psi_8^{(k)}}$ and $U^{2,n}_{s,\delta}$,
	\item [-] $U_{\psi_8^{(k)}}$ and $U^{3,8}_{s,\delta}$
\end{itemize}
give rise to pairwise inequivalent MRD-codes for any compatible $k$.
Then, to conclude the equivalence issue, we show the following
\begin{proposition}\label{p:s4}
	Let $t\ge3$ and assume that $\psi_n^{(k)}(x)$ is scattered, $1\le k<n$.
	Then the $\Fq$-subspaces ${U_{\psi_n^{(k)}}}$ and $U^{2,n}_{s,\delta}$ are not equivalent under the action of $\GaL(2,q^n)$. 
\end{proposition}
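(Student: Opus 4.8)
The plan is to assume, for contradiction, that there exists an invertible matrix $\begin{pmatrix} a&b\\ c&d\end{pmatrix}\in\F_{q^n}^{2\times2}$ and an automorphism $x\mapsto x^{p^j}$ realizing the $\GaL(2,q^n)$-equivalence. By the observation at the start of Section \ref{newLS} that the Frobenius $x\mapsto x^p$ stabilizes $U_{\psi_n^{(k)}}$, it suffices to treat the $\GL(2,q^n)$-case: for every $x\in\F_{q^n}$ there is a $z\in\F_{q^n}$ with
\begin{equation*}
\begin{pmatrix} a&b\\ c&d\end{pmatrix}\begin{pmatrix} x\\ \alpha^{(k)}(x)+\beta^{(k)}(x)\end{pmatrix}=\begin{pmatrix} z\\ \delta z^{q^s}+z^{q^{n-s}}\end{pmatrix}.
\end{equation*}
Eliminating $z$ via $z=ax+b(\alpha^{(k)}(x)+\beta^{(k)}(x))$ yields a single $q$-polynomial identity in $x$, and I would use \eqref{e:kfold} to write $\psi_n^{(k)}(x)=\frac12(x^{q^k}+x^{q^{t-k}}-x^{q^{t+k}}+x^{q^{2t-k}})$ explicitly.

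The key technical device, exactly as in Proposition \ref{GAL_psi}, is to split the identity according to the decomposition $x=x_1+x_2$ with $x_1\in\Fqt$ and $x_2\in W$ (Proposition \ref{simple_lemma}(iii)), using that $\alpha^{(k)}$ kills $W$ and acts as a Frobenius power on $\Fqt$, while $\beta^{(k)}$ kills $\Fqt$ and acts as a Frobenius power on $W$. Setting first $x_2=0$ and then $x_1=0$ produces two pairs of $q$-polynomial identities, one valid for all $x_1\in\Fqt$ (reduced modulo $x^{q^t}-x$) and one valid for all $x_2\in W$ (reduced modulo $x^{q^t}+x$). The right-hand side $\delta x^{q^s}+x^{q^{n-s}}$ contributes only two monomials, whereas the left-hand side from $\psi_n^{(k)}$ contributes the four exponents $q^k,q^{t-k},q^{t+k},q^{2t-k}$; after multiplying by the scalars $a,b,c,d$ and collecting, the requirement that every coefficient vanish gives a rigid linear/monomial matching.

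The main step is then a careful exponent-matching argument: I would compare, modulo $2t$, the exponent multisets on the two sides and show that there is no choice of $s$ with $\gcd(s,n)=1$ for which the four exponents of $\psi_n^{(k)}$ can be absorbed into the two exponents coming from $U^{2,n}_{s,\delta}$ without forcing three of the four coefficients $a,b,c,d$ to be zero (contradicting invertibility). Concretely, the fact that $\psi_n^{(k)}$ genuinely has four nonzero terms, symmetrically placed around $t$ with the characteristic sign pattern $+,+,-,+$, is what distinguishes it from the two-term binomial $U^{2,n}_{s,\delta}$; the norm condition $\N_{q^n/q}(\delta)\notin\{0,1\}$ on the LP-type side enters only to rule out the degenerate overlaps. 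The delicate point is handling the possible coincidences $t-k\equiv k$, $t+k\equiv 2t-k$, and the interaction with the parity cases ($t$ even versus $t$ odd) from Theorem \ref{t:main}, where two of the four exponents might collide and reduce the apparent number of monomials.

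The hard part will be organizing the exponent bookkeeping so that the several sub-cases (according to whether $s$ equals or avoids each of $k,t-k,t+k,2t-k$ modulo $n$, and according to the residue of $t$ mod $4$) are covered uniformly rather than by a long enumeration. I expect that in each sub-case the four split identities over-determine the four unknowns $a,b,c,d$: typically two of them are forced to be $0$ immediately by an isolated monomial with no counterpart, and the remaining two are then tied by an equation of the form $a^{q^j-1}=\zeta$ forcing $a$ into a proper subfield, which contradicts invertibility or the coprimality $\gcd(s,n)=1$. Once every sub-case yields $a=b=c=d=0$ or a subfield contradiction, the invertibility of $\begin{pmatrix} a&b\\ c&d\end{pmatrix}$ fails, establishing that $U_{\psi_n^{(k)}}$ and $U^{2,n}_{s,\delta}$ lie in distinct $\GaL(2,q^n)$-orbits.
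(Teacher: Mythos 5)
Your proposal follows essentially the same route as the paper: reduce to $\GL(2,q^n)$-equivalence, use Proposition \ref{GAL_psi} to assume $k<t$, restrict the resulting functional identity to $\Fqt$ (where $\psi_n^{(k)}$ acts as the single Frobenius power $x\mapsto x^{q^{t-k}}$), and force $a,b,c,d$ to vanish by matching $q$-exponents in a case analysis on $s$. The paper in fact only needs the restriction to $\Fqt$ --- the $W$-half of your splitting is never used --- and after that restriction the two sides contribute two, respectively four, monomials (not four versus two as you describe), but the exponent-matching mechanism you outline is exactly the one carried out in Cases 1a--1c and 2 of the paper's proof.
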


\begin{proof}
	Suppose that $U_{\psi_n^{(k)}}$ and $U^{2,n}_{s,\delta}$ are $\GaL(2,q^n)$-equivalent. 
	This is equivalent to suppose that $U_{\psi_n^{(k)}}$ and $U^{2,n}_{s,\delta}$ are 
	$\GL(2,q^n)$-equivalent.
	Furthermore, since, by Proposition \ref{GAL_psi}, $U_{\psi_n^{(k)}}$ and $U^{2,n}_{s,\delta}$ are 
	$\GaL(2,q^n)$-equivalent if and only if $U_{\psi_n^{(n-k)}}$ and $U^{2,n}_{s,\delta}$ are 
	$\GaL(2,q^n)$-equivalent, we may suppose $k <t$.
	Then, let $\begin{pmatrix}a&b\\ c&d\end{pmatrix}$ be an invertible matrix in $\F_{q^{n}}^{2\times2}$
	such that for any $x\in\F_{q^{n}}$ there exists $z\in\F_{q^{n}}$ such  that
	\[
	\begin{pmatrix}a&b\\ c&d\end{pmatrix}\begin{pmatrix}x\\ \psi_n^{(k)}(x)\end{pmatrix}=
	\begin{pmatrix}z\\ \delta z^{q^s}+z^{q^{n-s}}\end{pmatrix}.
	\]
	In particular, one obtains that for any $x\in\Fqt$
	\[
	cx+dx^{q^{k(t-1)}}=\delta \bigr (ax+bx^{q^{k(t-1)}} \bigr )^{q^s}+ \bigl (ax+bx^{q^{k(t-1)}} \bigr )^{q^{n-s}}.
	\]
	That is, any $x\in\Fqt$ is a root of the polynomial
	\begin{equation}\label{e:monomi}
	cx+dx^{q^{t-k}}-\delta a^{q^s}x^{q^s}-\delta b^{q^s}x^{q^{n-k+s}}-a^{q^{n-s}}x^{q^{n-s}}-b^{q^{n-s}}x^{q^{n-k-s}}.
	\end{equation}
	\begin{itemize}
		\item [$\circ$] \textsl{Case 1: $s<t$.}
		Then the polynomial in \eqref{e:monomi} becomes
		\begin{equation}\label{e:monomi-case1}
		cx+dx^{q^{t-k}}-\delta a^{q^s}x^{q^s}-\delta b^{q^s}x^{q^{e_1}}-a^{q^{n-s}}x^{q^{t-s}}-b^{q^{n-s}}x^{q^{e_2}},
		\end{equation}
		where $e_1$ and $e_2$ are the remainders of the divisions of $n-k+s$ and $n-k-s$ by $t$, respectively, and this polynomial
		is the null one.\\
		Call $M_1$, $M_2$, $\ldots$, $M_6$ the monomials in \eqref{e:monomi-case1}.	
		\begin{itemize}
			\item  \textit{Case 1a.} $s=k$. Since $k$ and $t$ are relatively prime, the integers $k$, $t-k$ and $e_2$ are distinct. Then
			
			$M_1$, $M_4$ are of degree $q^0$;\\ $M_2$, $M_5$ are of degree $q^{t-k}$;\\
			$M_3$ is of degree $q^{k}$;\\ 
			$M_6$ is of degree $q^{e_2}$.\\
			This implies $a=b=0$, a contradiction.
			\item \textit{Case 1b.} $s=t-k$.  Since $k$ and $t$ are relatively prime, the integers $k$, $e_1$  and $t-k$ are distinct.
			Then
			
			$M_1$, $M_6$ are of degree $q^0$;\\
			$M_2$, $M_3$ are of degree $q^{t-k}$;\\
			$M_4$ is of degree $q^{e_1}$;\\ 
			$M_5$ is of degree $q^{e_2}$.\\
			This implies $a=b=0$, a contradiction.
			\item \textit{Case 1c.} $s \neq k$ and $k+ s \neq t$. 
			In this case $M_1$ is the unique monomial of degree $q^0$, whence $c=0$;
			$M_1$ is the unique monomial of degree $q^{t-k}$, whence $d=0$, a contradiction.
		\end{itemize}
		\item [$\circ$] \textsl{Case 2: $s>t$.}
		Then one may suppose that $s=t+r$ with $r<t$. Then the polynomial in \eqref{e:monomi} becomes
		\begin{equation}\label{e:monomi-case2}
		cx+dx^{q^{t-k}}-\delta a^{q^s}x^{q^r}-\delta b^{q^s}x^{q^{d_1}}-a^{q^{n-s}}x^{q^{t-r}}-b^{q^{n-s}}x^{q^{d_2}},
		\end{equation}
		where $d_1$ and $d_2$ are the remainders of the divisions of $n-k+r$ and $n-k-r$ by $t$, respectively, and this polynomial
		is the null one.\\
		As before, call $N_1$, $N_2$, $\ldots$, $N_6$ the monomials in \eqref{e:monomi-case2}.
		Proceeding as in the previous case a contradiction is obtained.\qedhere
	\end{itemize} \end{proof}


In view of Theorem \ref{t:citato},
the following result summarizes Propositions \ref{GAL_psi}, \ref{p:s1}, \ref{p:s2}, \ref{p:s3} and \ref{p:s4}.

\begin{theorem}\label{t:finale}
  Let $t\ge3$ and $q$ odd if $t$ is even, or $q\equiv1\pmod4$ if $t$ is odd.
  Furthermore, let $1\le k<t$ be such that $\gcd(k,2t)=1$.
  Then the code $\cC_{\psi_{2t}^{(k)}}$ (cf.\ \eqref{e:cf} \eqref{e:kfold}) is an MRD-code with parameters
  $(2t,2t,q;2t-1)$ not equivalent to any previously known MRD-code.
  The $\varphi(2t)/2$ \footnote{Here $\varphi$ is Euler's totient function.} codes obtained in this way are distinct up to
  equivalence.
\end{theorem}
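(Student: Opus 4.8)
The plan is to assemble the statement from the scatteredness criterion of Theorem~\ref{t:main}, the two theorems of \cite{Sh} recalled in Section~\ref{s:newmrd}, and the $\GaL$-inequivalence results of Section~\ref{newLS}. First I would check that the hypotheses feed correctly into Theorem~\ref{t:main}. If $t$ is even, then $\gcd(k,2t)=1$ forces $\gcd(k,t)=1$, which is case $(i)$; if $t$ is odd, then $\gcd(k,2t)=1$ together with $q\equiv1\pmod4$ is exactly case $(ii)$. Hence $\psi_{2t}^{(k)}(x)$ is a scattered $q$-polynomial, and by the theorem of \cite{Sh} quoted in Section~\ref{s:newmrd} the associated code $\cC_{\psi_{2t}^{(k)}}$ is an $\Fq$-linear MRD-code with parameters $(2t,2t,q;2t-1)$, whose left idealiser is isomorphic to $\Fqn$.

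Next I would settle the inequivalence to previously known codes. Since the left idealiser is preserved up to isomorphism under code equivalence and $L(\cC_{\psi_{2t}^{(k)}})\cong\Fqn$, the classification of \cite[Proposition 6.1]{CMPZ} reduces the comparison to the codes $\cC_g$ arising from the known maximum scattered subspaces $U_g$. By Theorem~\ref{t:citato}, $\cC_{\psi_{2t}^{(k)}}$ is equivalent to such a $\cC_g$ if and only if $U_{\psi_{2t}^{(k)}}$ and $U_g$ are $\GaL(2,q^{n})$-equivalent, so it suffices to exclude $\GaL$-equivalence of subspaces. Proposition~\ref{p:s1} shows $L_{\psi_n^{(k)}}$ is not of pseudoregulus type, whence $U_{\psi_{2t}^{(k)}}$ is not $\GaL$-equivalent to $U^{1,n}_s$; Proposition~\ref{p:s4} gives directly that $U_{\psi_{2t}^{(k)}}$ is not $\GaL$-equivalent to $U^{2,n}_{s,\delta}$; both hold for every even $n\ge6$. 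For $n=8$, Propositions~\ref{p:s2} and~\ref{p:s3} additionally rule out the LP-type family and the family $U^{3,8}_{s,\delta}$. As the remaining known families $U^{3,n}$, $U^{4,6}$, $U^{5,6}$ exist only for $n\in\{6,8\}$, this exhausts the known subspaces for all $n\ge8$; for $n=6$ the unique resulting code reproduces that of \cite{BZZ,ZZ}, since $L_{\psi_6}=L^{5,6}_h$.

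To count the codes I would invoke Proposition~\ref{GAL_psi}: $U_{\psi_n^{(k)}}$ and $U_{\psi_n^{(m)}}$ are $\GaL$-equivalent precisely when $m\in\{k,\,n-k\}$. The residues $1\le k\le n-1$ with $\gcd(k,n)=1$ number $\varphi(2t)$ and split into the unordered pairs $\{k,\,n-k\}$; the value $k=t$ never occurs, as $\gcd(t,2t)=t>1$, so there is no fixed point and exactly $\varphi(2t)/2$ pairs arise, each having a unique representative with $1\le k<t$. By Theorem~\ref{t:citato}, distinct representatives give inequivalent codes, so precisely $\varphi(2t)/2$ pairwise inequivalent MRD-codes are obtained.

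The substantive difficulty has already been absorbed into the cited propositions, whose explicit Dickson-matrix and coefficient computations produce the $\GaL$-inequivalences; what remains here is organisational. The main point to get right is \emph{completeness} of the comparison, namely verifying that the listed propositions really account for every previously known maximum scattered subspace in each extension degree. This is exactly why degree $n=8$ requires the extra Propositions~\ref{p:s2} and~\ref{p:s3}, and why $n=6$ must be singled out as recovering the code of \cite{BZZ,ZZ} rather than producing a new one. The totient bookkeeping via Proposition~\ref{GAL_psi} is then the cleanest step.
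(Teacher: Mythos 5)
Your proposal is correct and follows essentially the same route as the paper, whose own ``proof'' is precisely the one-line assembly of Theorem \ref{t:citato} with Propositions \ref{GAL_psi}, \ref{p:s1}, \ref{p:s2}, \ref{p:s3} and \ref{p:s4}, preceded by the left-idealiser reduction via \cite[Proposition 6.1]{CMPZ} stated in the section preamble. If anything, you are more explicit than the paper on two points it glosses over: the check that $\gcd(k,2t)=1$ feeds into the correct case of Theorem \ref{t:main}, and the observation that for $n=6$ the single code recovers the one already implicit in \cite{BZZ,ZZ}.
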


Giovanni Longobardi and Corrado Zanella\\
Dipartimento di Tecnica e Gestione dei Sistemi Industriali\\
Universit\`a degli Studi di Padova\\
Stradella S. Nicola, 3\\
36100 Vicenza VI\\
Italy\\
\emph{\{giovanni.longobardi,corrado.zanella\}@unipd.it}

\end{document}